
\documentclass{amsart}

\usepackage{amsmath,amssymb,amsthm}
\usepackage{pinlabel}

\hyphenation{mani-fold mani-folds sub-mani-fold sub-mani-folds topo-logy
Topo-logy geo-metry Geo-metry Le-gen-drian Ana-lysis
Forschungs-insti-tut}

\newtheorem{prop}{Proposition}
\newtheorem{thm}[prop]{Theorem}
\newtheorem{lem}[prop]{Lemma}

\theoremstyle{definition}

\newtheorem*{ack}{Acknowledgements}


\def\co{\colon\thinspace}

\newcommand{\tb}{{\tt tb}}
\newcommand{\lk}{{\tt lk}}
\newcommand{\rot}{{\tt rot}}

\newcommand{\rma}{\mathrm{a}}
\newcommand{\rmb}{\mathrm{b}}
\newcommand{\rmc}{\mathrm{c}}
\newcommand{\rmd}{\mathrm{d}}
\newcommand{\rme}{\mathrm{e}}
\newcommand{\rmi}{\mathrm{i}}

\newcommand{\C}{\mathbb{C}}
\newcommand{\CP}{\mathbb{C}\mathrm{P}}
\newcommand{\N}{\mathbb{N}}
\newcommand{\Q}{\mathbb{Q}}
\newcommand{\R}{\mathbb{R}}
\newcommand{\RP}{\mathbb{R}\mathrm{P}}
\newcommand{\Z}{\mathbb{Z}}

\newcommand{\LL}{\mathbb{L}}

\newcommand{\xist}{\xi_{\mathrm{st}}}

\newcommand{\lambdac}{\lambda_{\mathrm{c}}}

\DeclareMathOperator{\PD}{\mathrm{PD}}


\begin{document}

\author[H.~Geiges]{Hansj\"org Geiges}
\address{Mathematisches Institut, Universit\"at zu K\"oln,
Weyertal 86--90, 50931 K\"oln, Germany}
\email{geiges@math.uni-koeln.de}

\author[S.~Onaran]{Sinem Onaran}
\address{Department of Mathematics, Hacettepe University,
06800 Beytepe-Ankara, Turkey}
\email{sonaran@hacettepe.edu.tr}

\title[Legendrian rational unknots]{Legendrian
rational unknots in lens spaces}

\date{}

\begin{abstract}
We classify Legendrian rational unknots with tight complements in
the lens spaces $L(p,1)$ up to coarse equivalence.
As an example of the general case, this classification
is also worked out for $L(5,2)$. The knots are described
explicitly in a contact surgery diagram of the corresponding
lens space.
\end{abstract}

\subjclass[2010]{53D10, 57M25, 57M27}

\maketitle


\section{Introduction}
This paper is concerned with the classification of
Legendrian rational unknots in lens spaces. The lens space in question
may be equipped with a tight or an overtwisted contact structure,
but in the latter case we require that the knot complement be
tight. Legendrian knots in overtwisted contact $3$-manifolds
with tight complement are called \emph{non-loose} or \emph{exceptional}.

The classification of Legendrian unknots in $S^3$ is
due to Eliashberg and Fraser~\cite{elfr09}. In the case
of the tight standard contact structure $\xist$ on $S^3$, their
classification is up to isotopy; in the case of exceptional unknots
in an overtwisted contact structure, up
to coarse equivalence. Recall that
two Legendrian knots $L_i\subset (M_i,\xi_i)$, $i=1,2$, in contact
$3$-manifolds are called \emph{coarsely equivalent} if there
is a contactomorphism $(M_1,\xi_1)\rightarrow (M_2,\xi_2)$
carrying $L_1$ to $L_2$.
Here the contact structures are understood to be (co-)oriented, and the
contactomorphism is supposed to preserve the (co-)orientation.
The classification of Legendrian knots in overtwisted contact manifolds
up to isotopy is complicated by the fact there are contactomorphisms
topologically but not contact isotopic to the identity, cf.\ the
discussion in \cite[Section~4.3]{elfr09}.

In the present paper we extend the classification result of Eliashberg
and Fraser to rational unknots in lens spaces. Our focus will lie on the
exceptional case, and we too are content with the
classification up to coarse equivalence. The classification
in the tight case is essentially due to Baker and
Etnyre~\cite{baet12}, although they give an explicit description
only for $L(p,1)$ with $p$ odd.

We obtain a complete classification (both in the tight
and the exceptional case) for the lens spaces $L(p,1)$
with $p$ any integer. As an illustration of the general case we also
discuss the classification for $L(5,2)$. In particular,
we determine the range of the classical
invariants realisable by such rational unknots, and the
$3$-dimensional homotopy invariant of the contact structures
containing exceptional knots.

This classification is achieved
as follows. The number of distinct Legendrian rational unknots
with tight complements is determined via the classification
of tight contact structures on solid tori; this strategy has
previously been employed by Etnyre~\cite{etny}. We then describe
the expected number of Legendrian rational unknots
explicitly in a contact surgery diagram of the lens space.
For the $3$-sphere, such a description is due to
Plamenevskaya~\cite{plam12}. In all cases the knots are distinguished
by the rational analogues of the classical Legendrian knot
invariants.

Here is an outline of the paper.
In Section~\ref{section:unknots} we recall the topological
classification of rational unknots in lens spaces. In
Section~\ref{section:classical} we describe
a result of Lisca et al.~\cite{loss09} about the computation of
the classical Legendrian knot invariants of a Legendrian
knot presented in a contact surgery diagram. We extend
their result from integral to rational homology spheres and
from nullhomologous to rationally nullhomologous knots. The
invariants in this case are the rational Legendrian knot
invariants~\cite{oztu05,bagr09,baet12}.

In Section~\ref{section:plane} we recall how to compute
the $3$-dimensional homotopy invariant of a contact structure
from a surgery diagram. This will be used in some cases to show that
the contact structure defined by a certain surgery diagram is
overtwisted.

Sections \ref{section:3-sphere} to~\ref{section:L52}
contain the classification for $S^3$, $\RP^3$, $L(p,1)$ and
$L(5,2)$, respectively. The classification for
$S^3$ and $\RP^3$ is of course subsumed by that for $L(p,1)$.
Nonetheless, the separate description of those two
simple cases allows us to include some additional details
and to make the whole classification scheme more
transparent. Many of the necessary computations
are relegated to Section~\ref{section:compute}.

We understand that B\"ulent Tosun has been working on the
classification problem discussed in this paper using a
parallel approach, but staying closer to the argument in
\cite{etny} rather than relying on surgery diagrams for the existence
part of the classification. This may in
fact be advantageous for dealing with the general $L(p,q)$.
\section{Rational unknots in lens spaces}
\label{section:unknots}
The lens space $L(p,q)$ with $p\in\N$ and $1\leq q\leq p-1$
coprime to~$p$ is defined as the quotient space of
$S^3\subset\C^2$ under the $\Z_p$-action generated by
\[ (z_1,z_2)\longmapsto (\rme^{2\pi\rmi/p}z_1,\rme^{2\pi\rmi q/p}z_2).\]
This gives $L(p,q)$ a canonical orientation, and our contact structures
are assumed to be positive for that orientation.
Alternatively, $L(p,q)$ with the described orientation can be obtained
by surgery along a single $(-p/q)$-framed unknot in~$S^3$.

A \emph{rational unknot} $K$ in some $3$-manifold
$M$ is a knot with a rational Seifert disc,
i.e.\ some cable of $K$ on the boundary
$\partial(\nu K)$ of a tubular neighbourhood of~$K$
is supposed to bound a $2$-disc $D$ embedded in $M\setminus\nu K$,
cf.~\cite{baet12}. As discussed in that paper,
the union $\nu K\cup D$ equals the complement of an open ball
in a lens space, so for the study of rational unknots
one may restrict attention to the case of $M$ being a lens space.

Moreover, a rational unknot\footnote{When we speak of a `rational unknot'
in $L(p,q)$ we always mean a rational unknot that is not
an honest unknot, i.e.\ the homological order of the knot is
supposed to be greater than~$1$.}
in $L(p,q)$ is then necessarily
the spine of one of the Heegaard tori. Recall that the
genus~1 Heegaard splitting of a lens space is unique up to
isotopy~\cite{bona83}. Hence, up to isotopy there are at most
four oriented rational unknots in $L(p,q)$, namely $\pm K_j$, $j=1,2$,
where
\[ K_1=\{ [\rme^{\rmi\theta},0]\co 0\leq\theta\leq 2\pi/p\}\subset
L(p,q),\]
and likewise for $K_2$. For $p=2$ this reduces in fact to
one possibility, and for $p>2$, $q\in\{1,p-1\}$ the knot $\pm K_1$
is isotopic to $\pm K_2$, both being fibres in an $S^1$-bundle over $S^2$.
In homology one has $[K_2]=q[K_1]$, so for $q\not\in\{1,p-1\}$
there are indeed four rational unknots up to isotopy,
see~\cite[Lemma~5.2]{baet12}.

In the surgery picture, $K_1$ can be represented
as in Figure~\ref{figure:standard-unknot}. The knot $K_2$ would be the
spine of the solid torus glued in to perform the surgery.
By exchanging the role of the two Heegaard tori,
which induces the orientation-preserving diffeomorphism
$L(p,q)\cong L(p,r)$ for $qr\equiv 1$ mod~$p$,
one gets a similar picture for $K_2$, with the surgery coefficient
replaced by $-p/r$.

\begin{figure}[h]
\labellist
\small\hair 2pt
\pinlabel $K_1$ [r] at 0 18
\pinlabel $-p/q$ [l] at 70 20
\endlabellist
\centering
\includegraphics[scale=1]{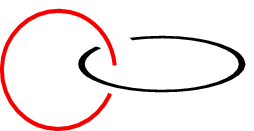}
  \caption{One of the rational unknots in $L(p,q)$.}
  \label{figure:standard-unknot}
\end{figure}

By expanding the rational surgery into integral surgeries
along a link of unknots, one can give representations of
$K_1$ and $K_2$ in a single surgery diagram. For instance,
in the case $p=5$, $q=2$ we can take $r=3$. Since
$-5/2=-3-1/(-2)$ and $-5/3=-2-1/(-3)$ we can represent
$K_1$ and $K_2$ as in Figure~\ref{figure:K1K2}. A slam dunk,
cf.~\cite[Figure~5.30]{gost99}, will then produce
Figure~\ref{figure:standard-unknot} or the analogous
picture for~$K_2$, respectively.

\begin{figure}[h]
\labellist
\small\hair 2pt
\pinlabel $K_1$ [r] at 0 18
\pinlabel $K_2$ [l] at 175 18
\pinlabel $-3$ [tl] at 38 12
\pinlabel $-2$ [tl] at 63 12
\pinlabel $-3$ [tr] at 109 12
\pinlabel $-2$ [tr] at 135 12
\endlabellist
\centering
\includegraphics[scale=1]{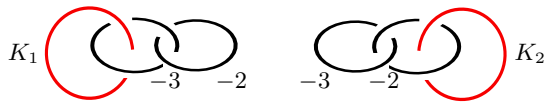}
  \caption{The two rational unknots in $L(5,2)$.}
  \label{figure:K1K2}
\end{figure}

Since we are interested in a classification up to coarse equivalence,
we need to take the action of the diffeomorphism group into account.
In this topological setting, coarse equivalence of two knots
is supposed to mean that there is an orientation-preserving
diffeomorphism of the ambient manifold sending one knot to
the other.

\begin{prop}
Up to coarse equivalence there is exactly one oriented rational unknot
in $L(p,q)$ for $q^2\equiv 1$ {\rm mod}~$p$, and exactly two
for $q^2\not\equiv 1$ {\rm mod}~$p$.
\end{prop}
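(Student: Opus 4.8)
The plan is to analyse the action of the group of orientation-preserving diffeomorphisms of $L(p,q)$ on the (at most four) oriented rational unknots $\pm K_1,\pm K_2$ enumerated above, keeping track of the fact that $[K_1]$ generates $H_1(L(p,q))\cong\Z/p$ and that $[K_2]=q[K_1]$. The argument splits into a reduction by two natural symmetries and a homological obstruction that decides when the two surviving classes coincide.

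First I would exhibit the complex conjugation $(z_1,z_2)\mapsto(\bar z_1,\bar z_2)$ on $S^3$. It conjugates the generator of the $\Z_p$-action to its inverse and hence descends to $L(p,q)$, and it is orientation-preserving since it reverses two of the four real coordinates. On the Heegaard spines it reverses the parametrising angle, so it sends $K_j$ to $-K_j$ for $j=1,2$. This establishes $K_j\sim -K_j$ and reduces the problem to deciding whether $K_1$ and $K_2$ are coarsely equivalent.

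The obstruction to such an equivalence is homological. Any orientation-preserving diffeomorphism $f$ of $L(p,q)$ acts on $H_1(L(p,q))\cong\Z/p$ by multiplication by some unit $m$, and since $f$ preserves the linking form $\lambda$ of the oriented manifold, the generator $g=[K_1]$ satisfies $m^2\lambda(g,g)=\lambda(g,g)$ in $\Q/\Z$; as $\lambda(g,g)=\pm q^{\pm 1}/p$ has order exactly $p$, this forces $m^2\equiv 1$ mod~$p$. Now $f(K_1)=\pm K_2$ would give $m\equiv\pm q$ mod~$p$ and hence $q^2\equiv 1$ mod~$p$. Therefore, when $q^2\not\equiv 1$ mod~$p$, the knots $K_1$ and $K_2$ lie in distinct coarse equivalence classes, and together with the conjugation symmetry this yields exactly two classes.

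It remains to treat the case $q^2\equiv 1$ mod~$p$, where $r\equiv q^{-1}\equiv q$ so that $r=q$ and the Heegaard-torus swap recalled above becomes an orientation-preserving self-diffeomorphism of $L(p,q)$ carrying $K_1$ to $K_2$; combined with the conjugation symmetry this collapses all four knots to a single class. The step requiring most care is the homological obstruction: I must confirm that the linking form is preserved (not merely up to sign) by an orientation-preserving diffeomorphism and that $\lambda(g,g)$ has full order $p$, so that the constraint $m^2\equiv 1$ is genuine. The realisation of the multiplier $m=q$ by the Heegaard swap when $q^2\equiv 1$, together with the bookkeeping that it respects orientations and indeed sends $K_1$ to $K_2$, is the other point to verify; the small cases $p=2$ and $q\in\{1,p-1\}$ all satisfy $q^2\equiv 1$ and are consistent with the stronger isotopy statements recalled earlier.
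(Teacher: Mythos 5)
Your proof is correct, and for the crucial step it takes a genuinely different route from the paper. The two symmetries you use --- complex conjugation to identify $K_j$ with $-K_j$, and the Heegaard-torus swap (equivalently $(z_1,z_2)\mapsto(z_2,z_1)$) to identify $K_1$ with $K_2$ when $q^2\equiv 1$ mod~$p$ --- are exactly the ones in the paper. Where you diverge is in proving that $K_1$ and $K_2$ are \emph{not} coarsely equivalent when $q^2\not\equiv 1$ mod~$p$: the paper simply invokes McCullough's (and ultimately Bonahon's) computation of the mapping class group of $L(p,q)$, which says the identity and conjugation are the only orientation-preserving diffeomorphisms up to isotopy, whereas you extract the obstruction directly from the linking form. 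Your argument is sound: an orientation-preserving diffeomorphism preserves the linking form on the oriented rational homology sphere, its action on $H_1(L(p,q))\cong\Z/p$ is multiplication by a unit $m$, and since $\lambda(g,g)$ is a unit multiple of $1/p$ in $\Q/\Z$ (hence of order exactly $p$), the constraint $m^2\lambda(g,g)=\lambda(g,g)$ forces $m^2\equiv 1$; as $f(K_1)=\pm K_2$ would require $m\equiv\pm q$, this rules out such an $f$ precisely when $q^2\not\equiv 1$. Your approach is more elementary and self-contained (it needs only the homological action, so it even obstructs orientation-preserving homotopy equivalences, not just diffeomorphisms), while the paper's citation of the full mapping class group computation is a heavier tool that yields strictly more information about which diffeomorphisms exist. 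Both are complete proofs of the stated proposition.
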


\begin{proof}
The orientation-preserving diffeomorphism of $L(p,q)$
induced by $(z_1,z_2)\mapsto (\overline{z}_1,\overline{z}_2)$
sends $K_j$ to $-K_j$, $j=1,2$. Thus, topologically we can ignore
the orientation of $K_j$.

For $q^2\equiv 1$ {\rm mod}~$p$, the orientation-preserving diffeomorphism
of $L(p,q)$ induced by $(z_1,z_2)\mapsto (z_2,z_1)$ exchanges
$K_1$ and $K_2$. For $q^2\not\equiv 1$ {\rm mod}~$p$,
there are only two orientation-preserving diffeomorphisms 
of $L(p,q)$ up to isotopy: the identity and the one
described above, see~\cite{mccu02}. So $K_1$ and $K_2$ cannot be
coarsely equivalent.
\end{proof}

However, as pointed out in~\cite{baet12} for the tight case,
the two different orientations of $K_1$ may correspond to
non-equivalent Legendrian realisations. Similar
considerations apply in the exceptional case. This gives some
information about the contactomorphism group of
the corresponding contact structure.
\section{The classical invariants}
\label{section:classical}
In this section we first recall from~\cite{baet12} how to define
the classical invariants
for \emph{rationally} nullhomologous Legendrian knots. We then
show how to compute these invariants for knots presented in
a surgery diagram of the ambient manifold, provided this manifold
is a rational homology sphere. This constitutes a mild extension of a result
due to Lisca et al.~\cite[Lemma~6.6]{loss09}.
\subsection{Definition of the invariants}
Let $L\subset (Y,\xi)$ be a rationally nullhomologous
Legendrian knot in a contact $3$-manifold, i.e.\ $L$ is
of some order $r\in\N$ in $H_1(Y)$. Then there is a rational Seifert
surface $\Sigma\subset Y$ for $rL$, i.e.\ a surface that is embedded,
except along its boundary, which is an $r$-fold covering of~$L$.
Note that the boundary of $\Sigma$ need not be connected. (One can
replace $L$ by a suitable embedded curve  or collection of curves
on the boundary of a tubular neighbourhood of~$L$, representing
the class $rL$ in the tubular neighbourhood, and then find
an embedded surface with that curve (or those curves) as its boundary.)
Let $L'$ be a push-off of $L$ in the direction of the contact framing,
i.e.\ the framing determined by $\xi|_L$. Then the
\emph{rational Thurston--Bennequin invariant} of $L$ is
\[ \tb_{\Q}(L):=\frac{1}{r}L'\bullet\Sigma,\]
i.e.\ the rational linking number of $L$ and $L'$.
Any two rational Seifert surfaces for $L$ differ by a class in
$H_2(Y)$. Since $L'$ is rationally nullhomologous, its intersection
number with such a class is zero, so $\tb_{\Q}(L)$ is well defined.

Now assume that $L$ is oriented. The contact structure $\xi$
is a trivial plane bundle when restricted to the rational Seifert
surface~$\Sigma$, and the \emph{rational rotation number}
$\rot_{\Q}(L)$ is defined by writing $r\cdot\rot_{\Q}(L)$ for the number
of full turns of the positive tangent vector to $L$, as $L$ is traversed
$r$ times, relative to the trivialisation of $\xi|_{\Sigma}$.
In general, this number will depend on the relative homology
class represented by~$\Sigma$. If the Euler class $e(\xi)$ is a torsion
class, then $\rot_{\Q}(L)$ is well defined.
\subsection{Computation in a surgery diagram}
As shown in~\cite{dige04}, any (closed, connected) contact $3$-manifold
$(Y,\xi)$ has a contact $(\pm 1)$-surgery presentation
$\LL=\LL_+\sqcup\LL_-\subset(S^3,\xist)$, i.e.\ there is a Legendrian
link $\LL$ in $S^3$ with its standard tight contact structure such 
that contact $(\pm 1)$-surgery along the components
of $\LL_{\pm}$ produces $(Y,\xi)$.

Now let $\LL=\LL_+\sqcup\LL_-$ be a contact
$(\pm 1)$-surgery presentation of a rational homology $3$-sphere
$(Y,\xi)$. Then $H^2(Y)$ will be a finite abelian group,
so the Euler class $e(\xi)$ is a torsion class.
Furthermore, let $L_0\subset(S^3\setminus\LL,\xist)$
be a Legendrian knot that becomes rationally nullhomologous
in $(Y,\xi)$. Denote the link components of $\LL$ by
$L_1,\ldots ,L_n$, and set $a_i=\tb(L_i)\pm 1$, depending on
whether $L_i$ belongs to $\LL_+$ or~$\LL_-$. So
$a_i$ is the integral surgery coefficient of the link component~$L_i$.
Write $M$ for the linking matrix of~$\LL$, i.e.\
\[ M:=(m_{ij})_{i,j=1}^n,\;\;\text{where}\;\;
m_{ij}:=\begin{cases}
a_i          & \text{if $i=j$,}\\
\lk(L_i,L_j) & \text{if $i\neq j$}.
\end{cases} \]
Define an extended matrix by
\[ M_0:=(m_{ij})_{i,j=0}^n,\;\;\text{where}\;\;
m_{ij}:=\begin{cases}
0            & \text{if $i=j=0$,}\\
a_i          & \text{if $i=j\neq 0$,}\\
\lk(L_i,L_j) & \text{if $i\neq j$}.
\end{cases}\]
In other words, $M_0$ is the linking matrix of $L_0\sqcup\LL$,
with the convention that $\lk(L_0,L_0)$ is set to~$0$.
As a final piece of notation, we write
$\rot_i$ for the rotation number of $L_i$, $i=0,\ldots,n$,
and $\tb_0$ for the Thurston--Bennequin invariant of~$L_0$,
all regarded as knots in $(S^3,\xist)$.

We can now formulate a lemma that tells us how to compute the
classical invariants of $L_0$ when it is regarded as a Legendrian knot
in the surgered manifold $(Y,\xi)$. For the case that $(Y,\xi)$
is an integral homology sphere, this lemma is
due to Lisca et al.~\cite[Lemma~6.6]{loss09}. (The condition
that $Y$ be an integral homology sphere is not contained
in the statement of their lemma, but in the paragraphs preceding it,
and it is used implicitly in their argument.)

\begin{lem}
\label{lem:loss}
The rational invariants of $L_0\subset(Y,\xi)$ are given by
\[ \rot_{\Q}(L_0)=\rot_0-\left\langle
\left(\begin{array}{c}\rot_1\\ \vdots\\ \rot_n\end{array}\right),
M^{-1}
\left(\begin{array}{c}\lk(L_0,L_1)\\ \vdots\\ \lk(L_0,L_n)\end{array}\right)
\right\rangle\]
and
\[ \tb_{\Q}(L_0)=\tb_0+
\frac{\det M_0}{\det M}. \]
\end{lem}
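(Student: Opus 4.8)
The plan is to compute each rational invariant by working in the surgered manifold $(Y,\xi)$ and exploiting the fact that the surgery is built from a Legendrian link in $(S^3,\xist)$, where the classical invariants $\tb_i,\rot_i$ are already understood. The key is to produce an explicit rational Seifert surface (or rational $2$-chain) for $L_0$ inside $Y$ and read off the two invariants as intersection numbers. I would first pass to a convenient model: glue in solid tori along the components $L_1,\ldots,L_n$ of $\LL$ to build $Y$, and represent the homology of $Y$ by the linking matrix $M$, whose invertibility over $\Q$ follows from $Y$ being a rational homology sphere.

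For the rotation number, the plan is to trivialise $\xi$ over a rational Seifert surface $\Sigma$ for $L_0$. The obstruction to extending the Seifert framing across the surgery tori is governed exactly by the rotation numbers $\rot_1,\ldots,\rot_n$ of the surgery curves, since each $\rot_i$ measures the twisting of $\xi$ along $L_i$ relative to the Seifert framing in $S^3$. Writing $L_0$ as a rational boundary forces one to solve a linear system whose coefficients are the linking numbers $\lk(L_0,L_i)$ and whose matrix is $M$; the correction term to $\rot_0$ is then the pairing of the solution vector $M^{-1}(\lk(L_0,L_i))_i$ against the vector $(\rot_i)_i$. This reproduces the stated formula for $\rot_{\Q}(L_0)$, and the well-definedness is guaranteed by the torsion of $e(\xi)$, already noted in the excerpt.

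For the Thurston--Bennequin invariant, the strategy is to compute the rational self-linking $\frac{1}{r}L_0'\bullet\Sigma$ of the contact push-off $L_0'$ against the same rational Seifert surface. In $S^3$ the contact push-off contributes $\tb_0$; the additional contributions come from how $\Sigma$ must cross the surgery solid tori, and these are encoded by the off-diagonal linking data together with the surgery coefficients $a_i$. I expect the natural bookkeeping to organise itself into the ratio of determinants $\det M_0/\det M$, where the extended matrix $M_0$ records the linking of $L_0$ with the $L_i$ and the diagonal $0$ in the $(0,0)$-slot reflects the convention $\lk(L_0,L_0)=0$ together with the fact that we measure $L_0'$ against $\Sigma$ rather than $L_0$ against itself. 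Expanding $\det M_0$ along its first row and column should make the correction term $-\langle (\lk(L_0,L_i))_i, M^{-1}(\lk(L_0,L_i))_i\rangle$ appear, matching the rational framing shift.

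The main obstacle, and the point where the extension beyond \cite[Lemma~6.6]{loss09} actually bites, is the passage from the integral to the rational setting: in the homology-sphere case $L_0$ bounds an honest Seifert surface and $M$ is unimodular, so the push-off and the surface can be taken disjoint away from controlled intersections. Here $L_0$ is only \emph{rationally} nullhomologous, so I must carefully set up the rational $2$-chain, track the fractional intersection numbers through the solid-torus gluings, and verify that the linear-algebra identities (in particular the determinant expansion giving $\tb_{\Q}$) survive when $M$ is merely invertible over $\Q$ rather than over $\Z$. Checking that these rational intersection counts are independent of the choices of $\Sigma$ — which reduces to the torsion of $e(\xi)$ and the triviality of the pairing of $L_0'$ with classes in $H_2(Y)$, as observed earlier — is the step I would treat most carefully.
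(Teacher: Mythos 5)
Your overall strategy coincides with the paper's: present $H_1(Y\setminus L_0)$ via the linking matrix $M$, obtain $\rot_{\Q}$ from a relative-Euler-class count over a rational Seifert surface, and obtain $\tb_{\Q}$ as a rational linking number whose correction term is the Schur complement $\det M_0/\det M=-\langle \ell, M^{-1}\ell\rangle$ with $\ell=(\lk(L_0,L_1),\dots,\lk(L_0,L_n))$. The $\tb_{\Q}$ half would go through essentially as you describe (the paper phrases it instead via the unique $a_0$ such that $a_0\mu_0+r\lambda_0$ is nullhomologous in $Y\setminus L_0$, together with an $(n+1)\times(n+1)$ determinant identity, but that is the same computation), and your closing worries about well-definedness are exactly the ones the paper disposes of using $H_2(Y)=0$ and the fact that $e(\xi)$ is torsion.

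The genuine gap is in the rotation-number half. You assert that ``the obstruction to extending [the trivialisation] across the surgery tori is governed exactly by the rotation numbers $\rot_i$,'' but this is precisely where contact geometry must enter, and it is not automatic: one has to show that the section of $\xi$ whose zero set computes the relative Euler class --- the section tangent to each $L_i$ --- can be arranged to extend \emph{without zeros} over each reglued solid torus, so that no further correction terms appear beyond $\sum_i\rot_i\mu_i$. The paper proves this in a separate lemma by normalising the boundary of a standard neighbourhood of $L_i$ as a convex torus and isotoping its linear Legendrian ruling into the meridional class of the glued-in torus; without such an argument the coefficient of $\mu_i$ in $\PD(e(\xi,L_0))$ is simply not identified. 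Relatedly, your gloss that ``$\rot_i$ measures the twisting of $\xi$ along $L_i$ relative to the Seifert framing'' describes $\tb_i$, not $\rot_i$; the quantity you actually need is the relative Euler number of $\xi$ over a Seifert surface of $L_i$ rel the tangential section, i.e. $\PD\bigl(e(\xist,\bigsqcup_iL_i)\bigr)=\sum_i\rot_i\mu_i$. Once that extension lemma is in place, the rest of your plan --- solving $M(\mu_1,\dots,\mu_n)^t=-\ell\,\mu_0$ over $\Q$ and pairing with $(\rot_1,\dots,\rot_n)$ --- does reproduce the stated formula.
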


These formulae are exactly the same as those in~\cite{loss09}, with $\tb$
and $\rot$ replaced by their rational counterparts. However, since it
is not entirely clear where the order of $L$ in $H_1(Y)$ might or
might not be relevant for the argument, we deem it worth to
include a proof.
\subsection{The relative Euler class}
Before we turn to that proof, we want to discuss
the behaviour of the relative Euler class of a contact structure
under a contact $(\pm 1)$-surgery along a Legendrian
knot~$L_i$. A neighbourhood of $L_i$ can be identified with
a neighbourhood of $S^1\times\{0\}\subset S^1\times\R^2$,
equipped with the contact structure $\ker(\cos\theta\, dx-\sin\theta\, dy)$.
We think of $S^1\times D^2\subset S^1\times\R^2$
as the solid torus we cut out during the
surgery. The boundary $S^1\times \partial D^2$ of this solid torus is
a convex surface with two dividing curves
$(\pm\sin\theta,\pm\cos\theta,\theta)$. These curves lie in the
class of the longitude $\lambdac$ giving the contact framing.
Write $\mu$ for the meridian of $S^1\times\partial D^2$ and
$\lambda$ for the standard longitude $S^1\times\{*\}$.
Then $\lambdac=\lambda-\mu$. Thus,
in terms of the standard meridian and longitude $\mu,\lambda$,
the slope of the dividing curves is~$-1$.
The convex torus $S^1\times \partial D^2$
has a linear Legendrian ruling given by the $\theta$-curves, which
represent the class $\lambda=\mu+\lambdac$.

Write $\mu',\lambda'$ for meridian and longitude, respectively, of a
solid torus we glue in to perform the surgery. Contact $(-1)$-surgery can be
described by the gluing maps
\[ \mu'\longmapsto\mu-\lambdac,\;\;\;\lambda'\longmapsto\mu;\]
contact $(+1)$-surgery, by the maps
\[ \mu'\longmapsto\mu+\lambdac,\;\;\;\lambda'\longmapsto\mu+2\lambdac.\]
Note that in both cases $\lambda'-\mu'$ gets glued to $\lambdac$.
So the slope of the dividing curves on the boundary of
the solid torus we want to glue in is again~$-1$, now with respect
to $\mu',\lambda'$.

The Legendrian ruling of the convex torus $S^1\times\partial D^2$
in the original model can be changed from the class
$\lambda=\lambdac+\mu$ to either $\mu$ or $\mu+2\lambdac$ by
an isotopic deformation of the $2$-torus through convex tori
of slope~$-1$ and linear Legendrian ruling,
staying inside any arbitrarily small neighbourhood
of the original torus. After this modification, we see that
contact $(\pm 1)$-surgery simply corresponds to regluing
a standard solid torus with slope $-1$ and linear Legendrian
ruling in the class~$\lambda_0$.

Now suppose that we perform contact $(\pm 1)$-surgery
along a Legendrian knot $L$ in a contact $3$-manifold $(M,\xi_0)$.
The relative Euler class $e(\xi_0,L)\in H^2(M,L)$
is Poincar\'e dual to the class in $H_1(M\setminus L)$ represented
by the zero set of a generic section of~$\xi_0$ that coincides with
the tangent direction along~$L$. By what we just discussed, we may assume
that this section coincides with the Legendrian ruling
on the boundary of a standard tubular neighbourhood of~$L$,
and this section will extend without zeros over the solid torus we
glue in when performing a surgery along~$L$. Translated into our
situation at hand, this implies the following statement.

\begin{lem}
\label{lem:Euler}
Under the natural map
\[ H_1(S^3\setminus (L_0\sqcup\ldots\sqcup L_n))\longrightarrow
H_1(Y\setminus L_0)\]
induced by inclusion, the Poincar\'e dual of the relative Euler
class $e(\xist,L_0\sqcup\ldots\sqcup L_n)$ maps to the Poincar\'e dual
of $e(\xi,L_0)$.\hfill\qed
\end{lem}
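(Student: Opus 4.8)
The plan is to represent the two Poincar\'e duals by the zero set of one and the same section and then to invoke naturality of this description. Let $X$ denote $S^3$ with open tubular neighbourhoods of $L_0,\ldots,L_n$ removed; it is a compact manifold whose boundary is the disjoint union of the tori $\partial(\nu L_i)$, and it is a deformation retract of $S^3\setminus(L_0\sqcup\ldots\sqcup L_n)$ as well as a subspace of $Y\setminus L_0$. As recalled above, $e(\xist,L_0\sqcup\ldots\sqcup L_n)$ is the obstruction to extending the section of $\xist$ given by the positive tangent direction along the link, and its Poincar\'e dual in $H_1(S^3\setminus(L_0\sqcup\ldots\sqcup L_n))$ is represented by the zero set of any generic such extension; the analogous statement holds for $e(\xi,L_0)$ over $Y$. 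I would therefore fix a single generic section $\sigma$ of $\xi=\xist$ over $X$ which on each boundary torus $\partial(\nu L_i)$ coincides with the linear Legendrian ruling and whose zero set $Z$ lies in the interior of $X$. The point, already contained in the model above, is that the Legendrian ruling points along the $\theta$-curves, so on each boundary torus this prescribed section is simply the restriction of the everywhere-nonzero section $\partial_\theta$ of $\xi$, which is at the same time the tangent direction to the core.

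Now I would cap off $X$ in two ways. Regluing the original neighbourhoods $\nu L_i$ for $i=0,\ldots,n$ recovers $(S^3,\xist)$, and over each $\nu L_i$ the section $\sigma$ extends by $\partial_\theta$ without zeros, this extension being tangent to $L_i$ along the core. The resulting section over $S^3$ is thus tangent to the whole link and has zero set exactly $Z$, so $[Z]\in H_1(S^3\setminus(L_0\sqcup\ldots\sqcup L_n))$ is Poincar\'e dual to $e(\xist,L_0\sqcup\ldots\sqcup L_n)$. Alternatively, regluing $\nu L_0$ by $\partial_\theta$ but replacing each $\nu L_i$ with $i\geq 1$ by the corresponding surgery solid torus recovers $(Y,\xi)$; by the discussion preceding the lemma, $\sigma$ extends without zeros over each surgery torus, while over $\nu L_0$ it is again the nonvanishing, $L_0$-tangent section $\partial_\theta$. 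The resulting section of $\xi$ over $Y$ is tangent to $L_0$ and its zero set is again $Z$, so $[Z]\in H_1(Y\setminus L_0)$ is Poincar\'e dual to $e(\xi,L_0)$.

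Since $Z$ lies in $X$, which includes into $Y\setminus L_0$ and is a deformation retract of $S^3\setminus(L_0\sqcup\ldots\sqcup L_n)$, the inclusion-induced map carries the class $[Z]$ in the first group to the class $[Z]$ in the second, and this is precisely the assertion of the lemma. The only step that requires genuine care is the construction of $\sigma$: one must arrange the prescribed Legendrian-ruling behaviour on the tori $\partial(\nu L_i)$ together with all of its zeros confined to the interior of $X$. Once this is done, the identification of the ruling direction with the nonvanishing section $\partial_\theta$ makes the zero-free extensions over both the original and the surgery solid tori automatic, and the argument closes.
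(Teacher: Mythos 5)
Your argument is essentially the paper's own proof: the lemma there carries a \qed precisely because the preceding paragraph already makes the point you elaborate, namely that both relative Euler classes are Poincar\'e dual to the zero set of one section of $\xi$ over the link exterior that restricts to the ruling direction on the boundary tori and extends without zeros both over the original tubular neighbourhoods and over the glued-in surgery tori. The only point to watch (which the paper handles by the small isotopic deformation changing the ruling class from $\lambda$ to $\mu$ or $\mu+2\lambdac$) is that a single section cannot literally coincide with both the ruling of the original model and that of the surgery solid torus on the same torus, so one interpolates in a collar; since you explicitly invoke that discussion for the zero-free extension over the surgery tori, your proof is correct.
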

\subsection{Proof of Lemma~\ref{lem:loss}}
Write $\mu_i$ for the meridian of $L_i$, and $\lambda_i$
for the longitude determined by the surface framing. Then
\[ H_1(S^3\setminus\bigsqcup_{i=0}^n L_i)\cong\Z_{\mu_0}\oplus\cdots\oplus
\Z_{\mu_n},\]
where $\Z_{\mu}$ denotes a copy of the integers generated by the
class~$\mu$. In $S^3\setminus\bigsqcup_{i=0}^n L_i$, the
longitude $\lambda_i$ represents the class
\begin{equation}
\label{eqn:lambda}
\lambda_i=\sum_{\substack{j=0\\j\neq i}}^n\lk(L_i,L_j)\mu_j.
\end{equation}
Surgery with coefficient $a_i$ along $L_i$ means that we
glue a new meridional disc along $a_i\mu_i+\lambda_i$, $i=1,\ldots ,n$.
It follows that
\[ H_1(Y\setminus L_0)\cong\Z_{\mu_0}\oplus\cdots\oplus\Z_{\mu_{n}}\slash
\langle a_i\mu_i+\sum_{\substack{j=0\\j\neq i}}^n\lk(L_i,L_j)\mu_j=0,\,
i=1,\ldots ,n\rangle.\]

On the other hand, from the Mayer--Vietoris sequence of the
triple $(Y; Y\setminus L_0,L_0)$ and the assumption that $Y$
be a rational homology sphere (as well as some
obvious identifications under excision isomorphisms)
we have the short exact sequence
\[ 0\longrightarrow H_1(T^2)\longrightarrow
H_1(Y\setminus L_0)\oplus H_1(L_0)\longrightarrow H_1(Y)\longrightarrow 0,\]
with $H_1(Y)$ a finite abelian group. We have $H_1(T^2)\cong\Z_{\mu_0}\oplus
\Z_{\lambda_0}$. The class $\mu_0$ maps to $0$ in $H_1(L_0)\cong\Z$;
the class $\lambda_0$, to~$1$. So the sequence reduces to
\begin{equation}
\label{eqn:sequence}
0\longrightarrow \Z_{\mu_0}\longrightarrow H_1(Y\setminus L_0)
\longrightarrow H_1(Y)\longrightarrow 0.
\end{equation}
(Alternatively, this follows from $L_0$ being rationally nullhomologous
in~$Y$.)

Hence the Poincar\'e dual of the relative Euler class $e(\xi,L_0)$
over the rationals,
\[ \PD(e(\xi,L_0))_{\Q}:=\PD(e(\xi,L_0))\otimes_{\Z}1\in H_1(Y\setminus L_0;
\Q)\cong\Q_{\mu_0}, \]
is some rational multiple of~$\mu_0$.
Beware that --- over the integers ---
$\mu_0$ is not, in general, a primitive element in $H_1(Y\setminus L_0)$.
For instance, for $Y=\RP^3$ and $L_0$ representing the generator
of $\pi_1(\RP^3)=\Z_2$, the class $\mu_0$ is twice the generator
of $H_1(\RP^3\setminus L_0)\cong\Z$.

The definition of the rotation number of a Legendrian knot can
be interpreted in terms of relative Euler classes. This translates into
\[ \PD\bigl(e(\xist,\bigsqcup_{i=0}^nL_i)\bigr)=\sum_{i=0}^n\rot_i\mu_i.\]
For the rational rotation number $\rot_{\Q}(L_0)$ of $L_0\subset(Y,\xi)$
we argue similarly.
If the order of $L_0$ in $H_1(Y)$ is~$r$, and $\Sigma$ is a rational
Seifert surface for $L_0$ in~$Y$, then
\[ r\cdot \rot_{\Q}(L_0)=\langle e(\xi,L_0),[\Sigma]\rangle=
\PD(e(\xi,L_0))\bullet\Sigma.\]
For this intersection product, only the free part of $\PD(e(\xi,L_0))$
is relevant, and since the intersection product $\mu_0\bullet[\Sigma]$
equals~$r$, we conclude that
\[ \PD(e(\xi,L_0))_{\Q}=\rot_{\Q}(L_0)\mu_0.\]
Hence, by Lemma~\ref{lem:Euler},
\[ \sum_{i=0}^n\rot_i\mu_i=\rot_{\Q}(L_0)\mu_0\;\;
\text{in $H_1(Y\setminus L_0;\Q)$}.\]
The relations in the presentation of $H_1(Y\setminus L_0)$
can be written formally as
\[ M\left(\begin{array}{c}\mu_1\\ \vdots\\ \mu_n\end{array}\right)
+\left(\begin{array}{c}\lk(L_0,L_1)\\ \vdots\\ \lk(L_0,L_n)\end{array}\right)
\mu_0=0.\]
The surgery description of $Y$ defines a $4$-dimensional handlebody $X$
with boundary $\partial X=Y$. Both $H_2(X)$ and $H_2(X,Y)$
are isomorphic to $\Z^n$. The relevant part of the
homology exact sequence of the pair $(X,Y)$ is
\[ H_2(Y)\longrightarrow H_2(X)\stackrel{M}{\longrightarrow} H_2(X,Y)
\longrightarrow H_1(Y).\]
Since $Y$ is a rational homology sphere, we have $H_2(Y)=0$,
so the matrix $M$ is invertible over~$\Q$. Therefore, 
in $H_1(Y\setminus L_0;\Q)$ we have
\[ \rot_{\Q}(L_0)\mu_0=\sum_{i=0}^n\rot_i\mu_i= \Bigl(\rot_0- \left\langle
\left(\begin{array}{c}\rot_1\\ \vdots\\ \rot_n\end{array}\right),
M^{-1}
\left(\begin{array}{c}\lk(L_0,L_1)\\ \vdots\\ \lk(L_0,L_n)\end{array}\right)
\right\rangle
\Bigr)\mu_0.\]
This proves the first formula in the lemma.

We now turn to the Thurston--Bennequin invariant. The contact framing
of $L_0$ (both in $(S^3,\xist)$ and in $(Y,\xi)$) is given by
$\tb_0\mu_0+\lambda_0$. On the other hand, from the exact
sequence~(\ref{eqn:sequence}) we see that there is a unique $a_0\in\Z$
such that $a_0\mu_0+r\lambda_0$ is nullhomologous in $Y\setminus L_0$,
where as before $r$ denotes the order of $L_0$ in~$Y$. Then the
rational Thurston--Bennequin invariant of $L_0$ in $(Y,\xi)$ can be
computed as
\begin{equation}
\label{eqn:tb}
r\cdot\tb_{\Q}(L_0)=(\tb_0\mu_0+\lambda_0)\bullet(a_0\mu_0+r\lambda_0)=
r\cdot\tb_0-a_0,
\end{equation}
where the intersection product should be interpreted as a product
on the boundary of a tubular neighbourhood of~$L_0$.

From~(\ref{eqn:lambda}) we have
\[ a_0\mu_0+r\lambda_0=a_0\mu_0+r\sum_{j=1}^n\lk(L_0,L_j)\mu_j,\]
and the fact that this is nullhomologous in $Y\setminus L_0$
means that it can be expressed as a linear combination of the
relations in $H_1(Y\setminus L_0)$, which yields
\[ 0=\left|\begin{array}{cccc}
a_0          & r\cdot\lk(L_0,L_1) & \cdots & r\cdot\lk(L_0,L_n)\\
\lk(L_1,L_0) & a_1                & \cdots & \lk(L_1,L_n)\\
\vdots       & \vdots             & \ddots & \vdots \\
\lk(L_n,L_0) & \lk (L_n,L_1)      & \cdots & a_n
\end{array}\right|
=a_0\det M+r\det M_0.\]
With (\ref{eqn:tb}) we get
\[ \tb_{\Q}(L)=\tb_0-\frac{a_0}{r}=\tb_0+\frac{\det M_0}{\det M}.\]
\section{Invariants of tangent $2$-plane fields}
\label{section:plane}
A surgery presentation $\LL=\LL_+\sqcup\LL_-\subset(S^3,\xist)$
of a given contact $3$-manifold $(Y,\xi)$ determines
a $4$-dimensional $2$-handlebody $X$ with boundary $\partial X=Y$.
Following \cite{gomp98} and \cite{dgs04} we are now going to explain
how to determine the homotopical data
of $\xi$ as a tangent $2$-plane field from such a presentation.

Given a choice of spin structure $s$ on $Y$, there is an
invariant $\Gamma(\xi,s)\in H_1(Y)$ of the homotopy type
of $\xi$ over the $2$-skeleton of~$Y$.
When the first Chern class $c_1(\xi)$ is a torsion class,
the homotopy obstruction over the $3$-skeleton can be
described by a rational number $d_3(\xi)$.

In \cite[Theorem~4.12]{gomp98} it is shown how to compute
$\Gamma(\xi,s)$ from a surgery diagram containing only
contact $(-1)$-surgeries. A spin structure $s$ can be
specified in terms of a characteristic sublink in the surgery
diagram. B\"ulent Tosun has informed us of a way to compute
the $2$-dimensional homotopy invariant $\Gamma(\xi,s)$
from any contact surgery diagram, including those
with $1$-handles and contact $(+1)$-surgeries.
This would allow one to give a complete homotopy classification of
the contact structures on $L(p,q)$ we describe in terms
of surgery diagrams in the following sections.

The $3$-dimensional invariant can be computed as follows.
Write $\sigma(X)$ for the signature of~$X$,
and $\chi (X)$ for its Euler characteristic. Let 
$\Sigma_i\subset X$ be the surface obtained by gluing
a Seifert surface of $L_i$ with the core disc
of the handle corresponding to $L_i\subset\LL$.
The homology class of $\Sigma_i$ in $H_2(X)$ is
completely determined by $L_i\subset S^3$.
Generalising a result of Gompf, the following was shown
in~\cite{dgs04}.

\begin{prop}
\label{prop:d3}
Suppose that $c_1(\xi)$ is torsion, and $\tb (L_i)\neq 0$
for each $L_i\in\LL_+$. Then
\begin{equation}
\label{eqn:d3}
d_3(\xi )=\frac{1}{4}\bigl( c^2-3\sigma (X)-2\chi (X)\bigr) +q,
\end{equation}
where $q$ denotes the number of components of $\LL_+$,
and $c\in H^2 (X)$ is the cohomology class
determined by $c(\Sigma_i)=\rot(L_i)$ for each $L_i\subset \LL$.
\end{prop}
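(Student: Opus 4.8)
The plan is to compute the three-dimensional homotopy invariant $d_3(\xi)$ by comparing $\xi$ with a reference plane field coming from an almost complex structure on the handlebody~$X$. I would begin by recalling Gompf's formula for the $d_3$-invariant of a contact structure presented purely by Legendrian surgery with contact coefficient $-1$ (equivalently, topological framing $\tb-1$), namely the case $\LL=\LL_-$. In that situation one builds an almost complex structure $J$ on $X$ for which $\xi$ is the field of complex tangencies along $\partial X=Y$, and the obstruction-theoretic invariant is read off from the formula $d_3(\xi)=\frac14\bigl(c_1(X,J)^2-3\sigma(X)-2\chi(X)\bigr)$, where $c_1(X,J)$ evaluates to $\rot(L_i)$ on each $\Sigma_i$. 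This is the Gompf result that Proposition~\ref{prop:d3} generalises, and it supplies the bracketed term together with the normalisation.

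The heart of the argument is to account for the contact $(+1)$-surgeries in $\LL_+$. I would invoke the standard fact, due to Ding--Geiges--Stipsicz, that a contact $(+1)$-surgery along $L_i$ can be cancelled against a contact $(-1)$-surgery along a Legendrian push-off of~$L_i$; equivalently, reversing orientation on the cobordism converts a $(+1)$-handle into a $(-1)$-handle at the cost of a controlled change in the homotopical data. The key point is that each positive surgery contributes an extra $+1$ to the three-dimensional invariant, which is precisely the origin of the summand $+q$, where $q=\#\LL_+$. I would track how passing from the closed reference computation to the mixed surgery diagram alters $\sigma(X)$, $\chi(X)$, and the evaluation of $c$, verifying that after bookkeeping the net correction is exactly the additive constant~$q$. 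The hypothesis $\tb(L_i)\neq 0$ for $L_i\in\LL_+$ enters here: it guarantees that the relevant framings are nonzero so that the push-off/cancellation procedure and the attendant handle computations are valid, and that $c$ is well defined as a genuine cohomology class with the prescribed evaluations.

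The main obstacle, as I see it, is the obstruction-theoretic comparison that justifies treating $d_3$ as a numerical invariant extracted from $(X,J)$ rather than an honest relative Euler-type class. Concretely, when $c_1(\xi)$ is merely torsion (not zero), $c_1(X,J)^2$ must be interpreted as a rational self-intersection, computed via the inverse of the intersection form, and one must check that this rational quantity is the correct input to Gompf's formula and that it is independent of the chosen characteristic covector. I would handle this by working over $\Q$, using that the torsion hypothesis makes $c_1(\xi)$ vanish rationally so that $c$ admits a well-defined square $c^2=\langle c,c\rangle$ computed against $M^{-1}$, exactly the mechanism already used in the proof of Lemma~\ref{lem:loss}. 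Once this rational self-intersection is shown to be the right substitute for Gompf's integral $c_1^2$, the formula~\eqref{eqn:d3} follows by assembling the reference computation, the $+q$ correction from the positive surgeries, and the signature and Euler-characteristic contributions of the handlebody.
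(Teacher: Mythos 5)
First, a point of reference: the paper does not prove Proposition~\ref{prop:d3} at all --- it states it as a result ``shown in~\cite{dgs04}'' (Ding--Geiges--Stipsicz), so there is no in-paper proof to compare your attempt against. Your outline does follow the broad strategy of the cited source: start from Gompf's formula $d_3=\frac14\bigl(c_1^2-3\sigma-2\chi\bigr)$ for the Stein case $\LL=\LL_-$, interpret $c^2$ rationally via the inverse of the linking matrix when $c_1(\xi)$ is only torsion, and then correct for the contact $(+1)$-surgeries.

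The genuine gap is at the central step. You assert that ``each positive surgery contributes an extra $+1$,'' but that is precisely the content of the proposition beyond Gompf, and the cancellation lemma as you invoke it does not deliver it. Adding a contact $(-1)$-push-off $L_i'$ to cancel a $(+1)$-surgery along $L_i$ does not leave the pair $(X,Y)$ alone: it adds a $2$-handle, changing $\chi$ by $1$, changing $\sigma$ according to the new $2\times 2$ linking block
\[
\begin{pmatrix} t+1 & t\\ t & t-1\end{pmatrix},\qquad t=\tb(L_i),
\]
(which has determinant $-1$, hence signature $0$), and changing $c^2$ because the intersection form and the rotation-number covector both change. One must actually carry out this bookkeeping --- comparing the correction term of the composite cobordism (which induces the identity on $(Y,\xi)$, hence has total correction zero) with the known Stein contribution of the $(-1)$-handle --- to extract that the $(+1)$-handle is responsible for exactly $+1$. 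Your proposal names this verification as something you ``would track'' but does not perform it, and since the $+q$ is the only non-Gompf ingredient of the formula, the argument as written is circular at its key point. Relatedly, your reading of the hypothesis $\tb(L_i)\neq 0$ for $L_i\in\LL_+$ (``the relevant framings are nonzero'') is too vague to certify where that hypothesis actually enters the handle computation; it is not needed for the cancellation lemma itself, so its role must be located in the explicit almost-complex extension over the $(+1)$-handles, which your sketch does not reach.
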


See \cite{dgs04} of an extensive discussion of this formula,
in particular concerning the computation of the term $c^2$.

The standard (and unique tight) contact structure $\xist$ on $S^3$ has
$d_3(\xist)=-1/2$. On $L(p,1)$ there are, by~\cite{giro00} and~\cite{hond00},
exactly $p-1$ tight contact structures up to isotopy.
They can be obtained by contact $(-1)$-surgery on $S^3$
along a single unknot with invariants $\tb=-p+1$ and
\[ \rot\in\{ -p+2,-p+4,\ldots ,p-4,p-2\},\]
obtained from the standard Legendrian unknot ($\tb=-1,\rot=0$)
by any mix of $p-2$ positive or negative stabilisations.
The $d_3$-invariant of the corresponding contact structure on
$L(p,1)$ is given by
\[ d_3=-\frac{1}{4}\bigl(1+\frac{\rot^2}{p}\bigr).\]
\section{The $3$-sphere}
\label{section:3-sphere}
Topologically trivial Legendrian knots in arbitrary
tight contact $3$-manifolds were shown
by Eliashberg and Fraser~\cite{elfr09} to be classified up
to Legendrian isotopy by the classical invariants $\tb$ and $\rot$,
and they determined the range of these invariants.
So part (a) of the following theorem is a weaker formulation
of their result, which we include for completeness and comparison
with the case of exceptional knots.

The exceptional unknots in the $3$-sphere $S^3$ have also
been classified, up to coarse equivalence,
by Eliashberg and Fraser~\cite[Theorem~4.7]{elfr09}.
An alternative proof of their result was given by Etnyre and
Vogel, see~\cite{etny}. We are going to give yet another
proof of this classification, which contains \emph{in nuce}
all the key ideas required to extend the result to lens spaces.
Our argument for determining an upper bound on the number
of exceptional knots is parallel to that of~\cite{etny}.
The proof is then completed by finding as many explicit
realisations of exceptional unknots as this bound allows.
In the case of $S^3$, these explicit realisations are due
to Plamenevskaya~\cite{plam12}.

\begin{thm}[Eliashberg--Fraser]
\label{thm:elfr}
{\rm (a)} Let $L\subset (S^3,\xist)$ be a Legendrian unknot. Then
$\tb(L)=n$ with $n$ a negative integer, and $\rot(L)$ lies in the range
\[ \{ n+1, n+3,\ldots ,-n-3,-n-1\}.\]
Any such pair of invariants $(\tb,\rot)$ is realised, and it
determines $L$ up to coarse equivalence,
i.e.\ for each $n\leq -1$ we have $|n|$ distinct Legendrian unknots.

{\rm (b)} Let $L\subset(S^3,\xi)$ be an exceptional unknot in
an overtwisted contact structure $\xi$ on~$S^3$. Then
$\xi$ is the contact structure determined up to
isotopy by $d_3(\xi)=1/2$, and
\[ \bigl(\tb(L),\rot(L)\bigr)\in\bigl\{(n,\pm(n-1))\co n\in\N\bigr\}.\]
These invariants determine $L$ up to coarse equivalence, and any pair
of invariants in this set is realised.
\end{thm}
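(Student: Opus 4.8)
The plan is to prove parts (a) and (b) in parallel where possible, but the heart of the matter is part (b) on exceptional unknots; part (a) I would treat quickly by quoting the structure of the classification of tight contact structures, since the realisation and invariant computation are standard. The overall strategy, following the outline in the introduction and mimicking \cite{etny}, splits into two halves: first an \emph{upper bound} on the number of exceptional unknots with given invariants, obtained by cutting along the knot and appealing to the classification of tight contact structures on the complement (a solid torus); and second a \emph{realisation} step producing exactly that many explicit Legendrian unknots via a contact surgery diagram, with the classical invariants separating them.

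For the upper bound I would proceed as follows. Let $L$ be an exceptional unknot in $(S^3,\xi)$, so its complement $S^3\setminus\nu L$ is a solid torus carrying a tight contact structure with convex boundary. The dividing set on the boundary torus is governed by $\tb(L)$, and the requirement that the complement be tight forces the boundary slope into a definite range. The classification of tight contact structures on the solid torus with prescribed boundary slope (Honda/Giroux) then bounds the number of tight fillings, and hence the number of possible exceptional $L$ up to coarse equivalence; the candidate pairs $(\tb,\rot)$ that can arise are exactly those with $\tb=n>0$ and $\rot=\pm(n-1)$. Here the rotation number constraint comes from the relative Euler class computation: once the complement's tight contact structure is pinned down, the possible rotation numbers are determined by how $\xi$ trivialises over the rational Seifert disc, and I would read these off. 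I should also verify at this stage that $\tb>0$ is forced (the knot is not Legendrian simple in the tight sense), which is the signature of the overtwisted ambient structure.

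For the realisation step I would exhibit, for each $n\in\N$, explicit Legendrian unknots in a contact surgery diagram of $S^3$ realising $(n,\pm(n-1))$, following Plamenevskaya~\cite{plam12}. Concretely I expect to start from a standard Legendrian unknot and perform contact $(+1)$-surgery on an auxiliary Legendrian knot so that the resulting manifold is again $S^3$ but with an overtwisted $\xi$, arranging the diagram so that $L_0$ becomes the desired exceptional unknot. I would then apply Lemma~\ref{lem:loss} to compute $\tb_{\Q}(L_0)=\tb(L_0)$ and $\rot_{\Q}(L_0)=\rot(L_0)$ directly from the linking matrix and rotation numbers in the diagram, confirming the values $(n,\pm(n-1))$; stabilising the surgery knot lets me sweep through all $n$ and both signs. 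To certify that the ambient $\xi$ is overtwisted and has $d_3(\xi)=1/2$, I would invoke Proposition~\ref{prop:d3}, computing $c^2$, $\sigma(X)$, $\chi(X)$ and the count $q$ of positive surgery components for the associated $4$-handlebody $X$; the value $d_3=1/2$ (versus $d_3=-1/2$ for $\xist$) simultaneously shows the structure is \emph{not} the tight one and identifies it uniquely up to isotopy, since homotopy classes of plane fields on $S^3$ are detected by $d_3$.

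The main obstacle I anticipate is the upper bound half, specifically extracting the precise rotation-number constraint $\rot=\pm(n-1)$ from the tight-complement classification, rather than merely bounding the count. Getting the exact set (not just its cardinality) requires matching the Honda invariants of the tight solid torus to the relative Euler class, and being careful that coarse equivalence — which allows the orientation-reversing contactomorphism exchanging the two signs — identifies certain candidates. A secondary subtlety is confirming that the explicit surgery realisations are genuinely non-loose (tight complement) and genuinely distinct, i.e.\ that the computed classical invariants are complete coarse-equivalence invariants in this overtwisted setting; this is where the parallel with the tight case breaks down and the argument must lean on the complement classification to show the invariants suffice.
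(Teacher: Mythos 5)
Your proposal follows the paper's proof in all essentials: the upper bound comes from decomposing $S^3=V_1\cup V_2$ with $V_1$ a standard neighbourhood of $L$ and counting tight contact structures on $V_2$ via Giroux--Honda (after a Dehn twist normalising the boundary slope, giving one structure for $n=1$ and two for $n\geq 2$ from the expansion $[-2,\ldots,-2]$ of $-n/(n-1)$), while the realisation uses Plamenevskaya's surgery diagrams with invariants computed from Lemma~\ref{lem:loss}, overtwistedness certified by $d_3=1/2$, and exceptionality shown by cancelling the $(+1)$-surgery against a $(-1)$-surgery along~$L$. The obstacle you flag --- extracting $\rot=\pm(n-1)$ directly from the Honda invariants of the complement --- is sidestepped in the paper: since $L$ is determined up to coarse equivalence by the tight structure on $V_2$, one only needs the \emph{cardinality} of the Giroux--Honda count to match the number of explicit examples with pairwise distinct classical invariants, so the exact rotation numbers are read off from the surgery diagrams alone.
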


\begin{proof}
Examples of Legendrian unknots in $(S^3,\xist)$ that
realise the invariants stated in the theorem are given
by arbitrary stabilisations of a standard Legendrian
unknot with $\tb=-1$ and $\rot=0$.

Examples of exceptional unknots with
the stated invariants have been described by Plamenevskaya~\cite{plam12}
in terms of the front projection of the knot in a
contact surgery diagram,
see Figure~\ref{figure:olga-lens}.

\begin{figure}[h]
\labellist
\small\hair 2pt
\pinlabel $(\rma)$ at 0 167
\pinlabel $(\rmb)$ at 0 55
\pinlabel $(\rmc)$ at 88 167
\pinlabel $L$ [bl] at 66 132
\pinlabel $+1$ [l] at 66 148
\pinlabel $+1$ [l] at 66 107
\pinlabel $L$ [bl] at 64 39
\pinlabel $+1$ [l] at 64 14
\pinlabel $L$ [bl] at 146 39
\pinlabel $+1$ [l] at 146 14
\pinlabel $-1$ [l] at 146 58
\pinlabel $-1$ [l] at 146 76
\pinlabel $-1$ [l] at 146 132
\pinlabel $-1$ [l] at 146 150
\endlabellist
\centering
\includegraphics[scale=1]{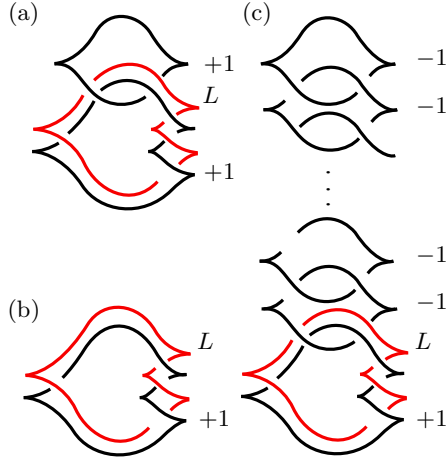}
  \caption{The exceptional unknots in the $3$-sphere.}
  \label{figure:olga-lens}
\end{figure}

Each of these diagrams gives a copy of $S^3$, as can be seen
by simple Kirby moves, cf.~\cite{plam12}.
A straightforward computation with the formula
from Proposition~\ref{prop:d3} shows that each
diagram gives a contact structure on $S^3$ with $d_3=1/2$.
So this contact structure is overtwisted (and determined
up to isotopy by this value of~$d_3$ thanks to Eliashberg's
classification of overtwisted contact structures~\cite{elia89},
cf.~\cite{geig08}). Contact $(-1)$-surgery along $L$
cancels the $(+1)$-surgery along the parallel knot,
see~\cite[Section~3]{dige04} or \cite[Prop.~6.4.5]{geig08}.
This leaves us with a diagram containing only contact
$(-1)$-surgeries, or one with a single $(+1)$-surgery along the
standard Legendrian unknot. The latter produces the
tight contact structure on $S^1\times S^2$, see~\cite[Lemma~4.3]{dgs04},
the former a Stein fillable and hence tight contact structure.
This shows that in all examples the knot $L$ is exceptional.

We claim that the knot $L$ in Figure~\ref{figure:olga-lens}(a)
has $(\tb,\rot) =(1,0)$, the one in (b) has $(\tb,\rot)=(2,\pm 1)$
(depending on a choice of orientation of~$L$), and the one
in (c) (with $n-2\geq 1$ unknots along which we perform
$(-1)$-surgery) has $(\tb,\rot)=(n,\pm(n-1))$.

Plamenevskaya determines $\tb(L)$
by keeping track of the contact framing through Kirby moves;
no comment is made about $\rot(L)$. In fact, the claimed values of
$\tb(L)$ and $\rot(L)$ follow easily from Lemma~\ref{lem:loss}.
See Section~\ref{section:compute} for some details of these
computations.

So we are left with showing that these invariants
(in the tight and exceptional case, respectively) determine
$L$ up to coarse equivalence, and that no other values of
the classical invariants can be realised.

Given a Legendrian unknot $L$ in $(S^3,\xist)$
or an exceptional unknot $L$ in $S^3$, decompose
the $3$-sphere along a torus as $S^3=V_1\cup V_2$, with
$V_1$ a standard neighbourhood of~$L$. More precisely,
with $\mu_i,\lambda_i$ denoting meridian and longitude
of the solid tori $V_i$, we assume that the gluing is described
by the identifications $\mu_1=\lambda_2$, $\lambda_1=\mu_2$,
and $\partial V_1$ is a convex torus with two dividing curves
of slope $1/n$, where $n:=\tb(L)$.

Both in the tight and the exceptional case,
the contact structure on $V_2$ is tight,
and the boundary $\partial V_2$ is convex with two
dividing curves of slope~$n$. Moreover, up to coarse
equivalence $L$ is determined by the contact structure
on~$V_2$.
According to Giroux~\cite{giro00} and
Honda~\cite{hond00}, the number of tight contact structures on
a solid torus $V$ inducing a fixed characteristic foliation on
$\partial V$ divided by two curves of slope $-p/q< -1$
is given by
\[ |(r_0+1)\cdot\ldots\cdot(r_{k-1}+1)\cdot r_k|,\]
where the $r_i<-1$ are the terms in the continued fraction expansion
\[ -\frac{p}{q}=r_0-\cfrac{1}{r_1-\cfrac{1}{r_2-\cdots-\cfrac{1}{r_k}}}
=:[r_0,\ldots,r_k]; \] 
for slope $-1$ there is a unique structure.

For $n<0$ the continued fraction expansion is given by $k=0$ and
$r_0=n$, i.e.\ we have $|n|$ distinct tight structures,
which corresponds to the $|n|$ realisations of a Legendrian
unknot with $\tb=n$ in $(S^3,\xist)$ described above.

For $n=0$, the contact structure on $V_2$ would have to be overtwisted,
so this case does not occur in $(S^3,\xist)$ or when $L$ is exceptional.

Finally, for $n>0$ we first have to modify $V_2$ by a Dehn twist
such that the dividing curves have a slope $\leq -1$, in order
to apply the classification result cited above. A Dehn twist
of $V_2$ that replaces $\lambda_2$ by $\lambda_2'=\lambda_2+k\mu_2$
changes the slope to $s_2'=n/(1-kn)$, since
\[ \mu_2+n\lambda_2=(1-kn)\mu_2+n\lambda_2'.\]
For $n=1$ we have to take $k=2$, which gives $s_2'=-1$. For this slope there
is exactly one tight contact structure on~$V_2$. For $n\geq 2$, we have
to take $k=1$, resulting in a slope $s_2'=-n/(n-1)$. In this case there
are exactly two tight contact structures on~$V_2$, for inductively one
sees that the continued fraction expansion of $-n/(n-1)$ is
$[-2,\ldots,-2]$.

Thus, for $n\geq 1$ the number of tight contact structures on
$V_2$ equals the number of examples in Figure~\ref{figure:olga-lens}.
It follows that these examples constitute a complete list
of exceptional unknots.
\end{proof}
\section{Projective space}
\label{section:rp3}
The following is the analogue of Theorem~\ref{thm:elfr}
for the real projective space $\RP^3=L(2,1)$.

\begin{thm}
\label{thm:rp3}
{\rm (a)} Let $L\subset (\RP^3,\xist)$ be a Legendrian rational unknot
in the unique tight contact structure on $\RP^3$. Then $\tb_{\Q}(L)=n+1/2$
with $n$ a negative integer, and $\rot_{\Q}(L)$ lies in the range
\[ \{ n+1,n+3,\ldots ,-n-3,-n-1 \}.\]
Any such pair of invariants $(\tb_{\Q},\rot_{\Q})$ is realised, and it
determines $L$ up to coarse equivalence,
i.e.\ for each $n\leq -1$ we have $|n|$ distinct Legendrian
rational unknots.

{\rm (b)} Up to coarse equivalence, the exceptional rational unknots $L$
in an overtwisted $(\RP^3,\xi)$ are in one-to-one correspondence
with the following set of values of the classical invariants:
\[ \bigl(\tb_{\Q}(L),\rot_{\Q}(L)\bigr)\in
\bigl\{ (n+1/2,\pm n),\,
(m+1/2,\pm (m-1))\co n\in\N_0,\, m\in\N
\bigr\} .\]
The overtwisted contact structure containing the knots
of the first series has $d_3$-invariant equal to $1/4$,
the one containing the second series has $d_3=3/4$.
\end{thm}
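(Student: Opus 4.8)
The plan is to mirror the proof of Theorem~\ref{thm:elfr} for $S^3$ as closely as possible, which has two halves: (i) a \emph{realisation} part producing explicit exceptional unknots via surgery diagrams, and (ii) a \emph{classification} part bounding the number of such knots from above by the count of tight contact structures on a complementary solid torus. For the realisation part, I would first set up surgery diagrams for $\RP^3=L(2,1)$ analogous to those of Plamenevskaya in Figure~\ref{figure:olga-lens}, representing the rational unknot $L$ (the spine $K_1$ of a Heegaard torus) together with auxiliary $(\pm 1)$-surgery curves producing $L(2,1)$. I would verify that each diagram yields $\RP^3$ by Kirby moves, compute $d_3$ via Proposition~\ref{prop:d3} (expecting the two stated values $1/4$ and $3/4$, confirming overtwistedness by Eliashberg's classification), and compute $\tb_{\Q}(L)$ and $\rot_{\Q}(L)$ directly from Lemma~\ref{lem:loss}, checking that they hit the claimed values $(n+1/2,\pm n)$ and $(m+1/2,\pm(m-1))$. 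As in the $S^3$ case, a contact $(-1)$-surgery along $L$ should cancel an adjacent $(+1)$-surgery, leaving a Stein-fillable (hence tight) complement, which shows each $L$ is genuinely exceptional.

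For the classification part, I would decompose $\RP^3=V_1\cup V_2$ along the Heegaard torus, with $V_1$ a standard neighbourhood of $L$ and $V_2$ its tight complement, recording meridian/longitude identifications. The essential arithmetic difference from $S^3$ is that here the gluing across the Heegaard torus is the one defining $L(2,1)$ rather than $S^3$, so the slope of the dividing curves on $\partial V_2$, expressed in the $(\mu_2,\lambda_2)$ coordinates, will be a $p=2$-shifted version of $\tb$ rather than $\tb$ itself—this is exactly what produces the half-integer values $\tb_{\Q}=n+1/2$. I would then apply the Giroux--Honda count of tight contact structures on a solid torus with prescribed convex boundary, using the continued-fraction formula quoted in the proof of Theorem~\ref{thm:elfr}, after a Dehn twist to bring the slope below $-1$ when necessary. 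Matching this count against the number of realisations from part (i), together with the rational rotation number distinguishing orientations and stabilisation choices, gives the one-to-one correspondence.

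The part requiring the most care will be the \emph{bookkeeping of the slope and framing conventions} once the $\Z_2$-quotient enters: because $L$ has homological order $2$, the longitude $\lambda_2$ representing the surface framing differs from the contact longitude by the rational correction already visible in equation~(\ref{eqn:tb}), and one must track how the Heegaard gluing $\mu_1=\lambda_2,\lambda_1=\mu_2$ transforms the slope $1/n$ on $\partial V_1$ into the effective slope on $\partial V_2$. Getting the resulting continued-fraction expansion correct—so that the solid-torus count produces precisely \emph{one} structure for the extremal value and \emph{two} otherwise, and so that the two series with $d_3=1/4$ and $d_3=3/4$ are not conflated—is where the genuine content lies; the surgery-diagram computations, by contrast, are routine applications of Lemma~\ref{lem:loss} and Proposition~\ref{prop:d3} whose details I would relegate to Section~\ref{section:compute}.
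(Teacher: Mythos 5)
Your proposal follows essentially the same two-part strategy as the paper: explicit surgery-diagram realisations (with invariants computed via Lemma~\ref{lem:loss} and Proposition~\ref{prop:d3}, and exceptionality shown by cancelling the $(+1)$-surgeries), combined with an upper bound from the Giroux--Honda count of tight structures on the complementary Heegaard solid torus. One small correction to your bookkeeping: the relevant gluing is $\mu_2=-\mu_1+2\lambda_1$, $\lambda_2=\lambda_1$ (not the $S^3$ identification $\mu_1=\lambda_2$, $\lambda_1=\mu_2$ that you quote), which gives $\tb_{\Q}(L)=n+1/2$ and boundary slope $s_2=-2-1/n$ on $V_2$, whose continued fraction expansion $[-3,-2,\ldots,-2]$ for $n\geq 1$ yields the counts $1,3,4,4,\ldots$ matching the list of examples.
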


In other words, there is exactly one exceptional rational unknot
with $\tb=1/2$, there are three with $\tb=3/2$, and there are four
each for $\tb=(2n+1)/2$ with $n\geq 2$.

\begin{proof}
Legendrian rational unknots in $(\RP^3,\xist)$ that realise the stated
values of the invariants are given as follows. Represent
$(\RP_3,\xist)$ by $(-1)$-surgery
along a single standard Legendrian unknot in $S^3$ with
$\tb =-1$ and $\rot=0$, and let $L_0$ be a push-off of the surgery curve
with $k-1$ positive and $|n|-k$ negative stabilisations,
$k=1,\ldots,|n|$. Observe that by Lemma~\ref{lem:loss} the
push-off without any stabilisations has $\tb_{\Q}=-1+\frac{-1}{-2}=-1/2$.

Examples of exceptional rational unknots
with the stated invariants are shown in Figures
\ref{figure:rp3-tb3} and~\ref{figure:rp3-tb5}. With some simple
Kirby moves one sees that in all cases $L$ is an isotopic copy
of the standard rational unknot $L_0\subset\RP^3$. We illustrate
this in Section~\ref{section:compute} for the example
in Figure~\ref{figure:rp3-tb5}(a); there we also explain
how $\tb_{\Q}(L)$ can be computed from such Kirby moves instead
of Lemma~\ref{lem:loss}.

\begin{figure}[h]
\labellist
\small\hair 2pt
\pinlabel $(\rma)$ at 0 97
\pinlabel $(\rmb)$ at 103 97
\pinlabel $(\rmc)$ at 203 97
\pinlabel $L$ [bl] at 76 56
\pinlabel $+1$ [l] at 76 44
\pinlabel $+1$ [l] at 76 33
\pinlabel $+1$ [l] at 76 24
\pinlabel $L$ [bl] at 177 56
\pinlabel $+1$ [l] at 177 21
\pinlabel $L$ [bl] at 286 74
\pinlabel $+1$ [l] at 286 19
\endlabellist
\centering
\includegraphics[scale=1]{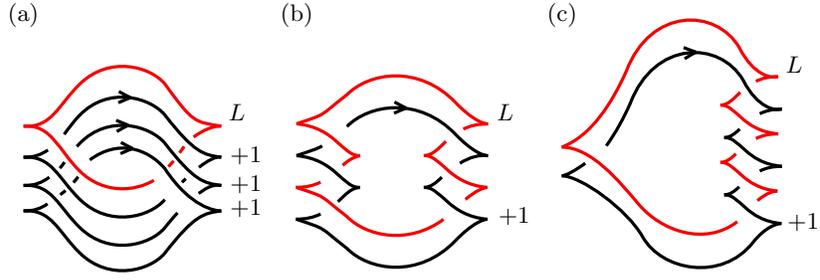}
  \caption{Exceptional rational unknots in projective $3$-space I.}
  \label{figure:rp3-tb3}
\end{figure}

\begin{figure}[h]
\labellist
\small\hair 2pt
\pinlabel $(\rma)$ at 0 185
\pinlabel $(\rmb)$ at 145 185
\pinlabel $n-2$ [r] at 0 96
\pinlabel $n-2$ [l] at 221 96
\pinlabel $L$ [bl] at 69 39
\pinlabel $+1$ [l] at 69 14
\pinlabel $-1$ [l] at 65 56
\pinlabel $-1$ [l] at 65 74
\pinlabel $-1$ [l] at 65 136
\pinlabel $-1$ [l] at 65 167
\pinlabel $L$ [br] at 145 31
\pinlabel $+1$ [r] at 145 23
\pinlabel $-1$ [r] at 155 56
\pinlabel $-1$ [r] at 155 74
\pinlabel $-1$ [r] at 155 136
\pinlabel $-1$ [r] at 155 167
\endlabellist
\centering
\includegraphics[scale=1]{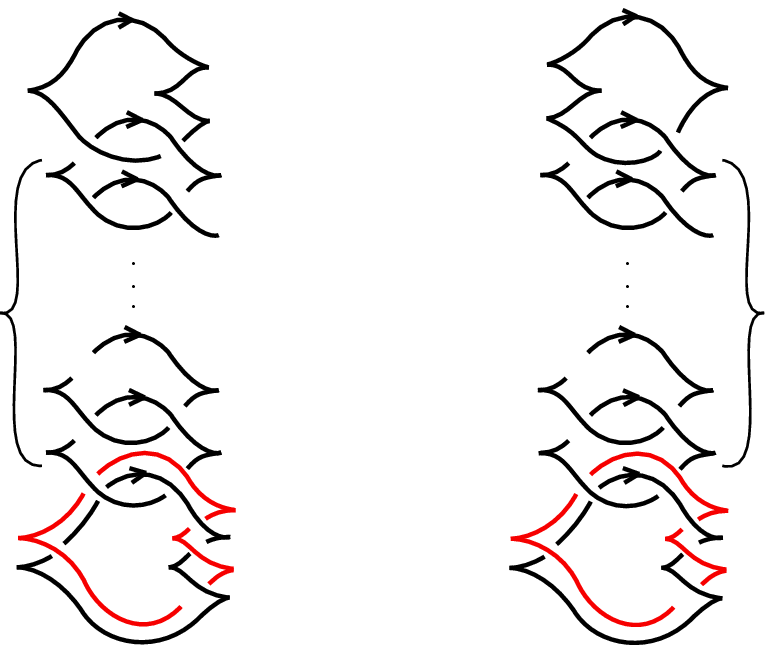}
  \caption{Exceptional rational unknots in projective $3$-space II.}
  \label{figure:rp3-tb5}
\end{figure}

The invariants of these exceptional examples are listed in
Table~\ref{table:rp3}. A sample computation of these invariants is
given in Section~\ref{section:compute}. Since the $d_3$-invariant
differs from $d_3(\xist)=-1/4$, all contact structures given by
these surgery diagrams are overtwisted.

\begin{table}
{\renewcommand{\arraystretch}{1.2}
\begin{tabular}{|c|c|c|c|c|}  \hline
Figure                  & $n$           & $\tb_{\Q}(L)$ &
   $\rot_{\Q}(L)$ & $d_3(\xi)$ \\ \hline
\ref{figure:rp3-tb3}(a) &  -            & 1/2           &
   0              & 1/4        \\ \hline
\ref{figure:rp3-tb3}(b) &  -            & 3/2           &
   0              & 3/4        \\ \hline
\ref{figure:rp3-tb3}(c) &  -            & 3/2           &
   $\pm 1$        & 1/4        \\ \hline
\ref{figure:rp3-tb5}(a) & even $\geq 2$ & $n+1/2$       &
   $\pm(n-1)$     & 3/4        \\ \hline
\ref{figure:rp3-tb5}(b) & odd $\geq 3$  & $n+1/2$       &
   $\pm(n-1)$     & 3/4        \\ \hline
\ref{figure:rp3-tb5}(a) & odd $\geq 3$  & $n+1/2$       &
   $\pm n$        & 1/4        \\ \hline
\ref{figure:rp3-tb5}(b) & even $\geq 2$ & $n+1/2$       &
   $\pm n$        & 1/4        \\ \hline
\end{tabular}
}
\vspace{1.5mm}
\caption{Invariants of the exceptional rational unknots in~$\RP^3$.}
\label{table:rp3}
\end{table}

Except for the example in Figure~\ref{figure:rp3-tb3}(a),
a single contact $(-1)$-surgery along $L$ produces a Stein fillable
contact manifold. In that first example, contact $(-1)$-surgery along
$L$ and two push-offs of $L$, which by the algorithm in
\cite{dgs04} is equivalent to contact $(-1/3)$-surgery along~$L$,
yields $(S^3,\xist)$. So in all cases $L$ is exceptional.

By a similar argument as in the proof of Theorem~\ref{thm:elfr}
we are now going to show that this amounts to a complete list of
the rational unknots in $\RP^3$ up to coarse
equivalence. Given a Legendrian rational unknot $L$ in
$(\RP^3,\xist)$ or an exceptional rational unknot
$L$ in $\RP^3$, we decompose $\RP^3$ into two solid tori
$V_1,V_2$, with $V_1$ a standard neighbourhood of~$L$.
From the standard surgery picture in Figure~\ref{figure:standard-unknot}
we see that the gluing of $V_1$ and $V_2$ is given by
$\mu_2=-\mu_1+2\lambda_1$ and $\lambda_2=\lambda_1$. Suppose
the contact framing of $L$ is $\lambdac=n\mu_1+\lambda_1$
for some $n\in\Z$. Then
\[ \lambdac\bullet\mu_2=
(n\mu_1+\lambda_1)\bullet (-\mu_1+2\lambda_1)=2n+1,\]
hence $\tb_{\Q}(L)=n+1/2$.

In order to compute the slope of the convex torus $\partial V_2$,
we need to express $\lambda_c$ in terms of $\mu_2$ and $\lambda_2$:
\[ \lambdac=n\mu_1+\lambda_1=-n\mu_2+(2n+1)\lambda_2.\]
So the slope is $s_2=-2-1/n$.

For $n\leq -1$ the result of Giroux and Honda quoted
in the proof of Theorem~\ref{thm:elfr} tells us that
there are $|n|$ distinct tight contact structures on~$V_2$.
These are all realised as the complement of a rational
unknot $L_0$ in the tight $(\RP_3,\xist)$ with $\tb_{\Q}(L_0)=n+1/2$.

For $n=0$ the slope $s_2$ is infinite. This can be changed to
$-1$ by a single Dehn twist. So there is a unique tight
contact structure on~$V_2$. For $n\geq 1$, it is easy to see
inductively that the slope $s_2=-2-1/n$ has the continued fraction
expansion $[-3,-2,\ldots ,-2]$, where $-2$ occurs $n-1$
times. So, by Giroux and Honda, there are three tight structures for $n=1$,
and four each for $n\geq 2$. In all cases, this equals the number
of examples in Figures \ref{figure:rp3-tb3} and~\ref{figure:rp3-tb5}.
\end{proof}
\section{The lens spaces $L(p,1)$}
\label{section:Lp1}
The discussion of the preceding section easily generalises
to the lens spaces $L(p,1)$. The following theorem subsumes
Theorems \ref{thm:elfr} and~\ref{thm:rp3}. Part (a) is
essentially the same as~\cite[Theorem~5.5]{baet12}; again, we
state it here merely for completeness and comparison with the
exceptional case.

\begin{thm}
\label{thm:Lp1}
{\rm (a)} Let $L\subset (L(p,1),\xi)$ be a Legendrian rational unknot
in a tight contact structure on $L(p,1)$. Then $\tb_{\Q}(L)=n+1/p$
with $n$ a negative integer, and $\rot_{\Q}(L)$ is of the form
\[ \rot_{\Q}(L)=r_0+\frac{r_1}{p}\]
with
\[ r_0\in\{ n+1,n+3,\ldots,-n-3,-n-1\}\]
and
\[ r_1\in\{ -p+2,-p+4,\ldots ,p-4,p-2\}.\]
Any such pair of invariants $(\tb_{\Q},\rot_{\Q})$ is realised, and it
determines $L$ up to coarse equivalence,
i.e.\ for each $n\leq -1$ we have $|n|\cdot (p-1)$ distinct Legendrian
rational unknots.

{\rm (b)} Up to coarse equivalence, the exceptional rational unknots
in an overtwisted $(L(p,1),\xi)$, $p\in\N$, are classified by their
classical invariants $\tb_{\Q}$ and $\rot_{\Q}$. The possible values
of $\tb_{\Q}$ are $n+1/p$ with $n\in\N_0$. For $n=0$,
there is a single exceptional knot, and it has $\rot_{\Q}=0$.
For $n=1$, there are $p+1$ exceptional knots, with $\rot_{\Q}$ lying
in the range
\[ \bigl\{ -1,-1+\frac{2}{p},-1+\frac{4}{p},\ldots ,-1+\frac{2p}{p}=+1
\bigr\}. \]
For $n\geq 2$, there are $2p$ exceptional knots, with $\rot_{\Q}$
in the range
\[ \bigl\{ \pm \bigl(n-2+\frac{2}{p}\bigr),
\pm \bigl(n-2+\frac{4}{p}\bigr),\ldots,
\pm \bigl(n-2+\frac{2p}{p}\bigr)=\pm n\bigr\}.\]
\end{thm}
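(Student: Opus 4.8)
The plan is to follow the two-pronged strategy already rehearsed for $S^3$ and $\RP^3$: first produce explicit exceptional rational unknots in $L(p,1)$ realising all the claimed invariants via surgery diagrams, and then establish an upper bound on the number of coarse equivalence classes by classifying the tight contact structures on the complementary solid torus. For the realisation part I would generalise Figures~\ref{figure:rp3-tb3} and~\ref{figure:rp3-tb5}: represent $(L(p,1),\xi)$ by contact surgery along a chain of Legendrian unknots (one $(+1)$-surgery together with the appropriate stabilised unknots realising $\tb=-p+1$ and the various rotation numbers from Section~\ref{section:plane}), take $L$ to be a suitable push-off of the surgery curve, and compute $\tb_{\Q}(L)$ and $\rot_{\Q}(L)$ directly from Lemma~\ref{lem:loss}. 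The $d_3$-computation via Proposition~\ref{prop:d3} then certifies that the ambient structure is overtwisted, while a cancelling contact $(-1)$-surgery along $L$ (or along $L$ together with push-offs, as in the $\RP^3$ case) yields a Stein fillable, hence tight, complement, showing $L$ is genuinely exceptional.

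For the upper bound I would decompose $L(p,1)=V_1\cup V_2$ with $V_1$ a standard neighbourhood of $L$, reading off the gluing from Figure~\ref{figure:standard-unknot} with $q=1$: here $\mu_2=-p\mu_1+\lambda_1$ (up to the correct conventions) and $\lambda_2=\mu_1$, or the analogous relations. Writing the contact framing as $\lambdac=n\mu_1+\lambda_1$, I would compute $\tb_{\Q}(L)=n+1/p$ exactly as in the $\RP^3$ argument by intersecting $\lambdac$ with $\mu_2$ on $\partial V_1$, and then re-express $\lambdac$ in the basis $\mu_2,\lambda_2$ to find the slope $s_2$ of the convex boundary $\partial V_2$. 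The heart of the counting is then the Giroux--Honda formula quoted in the proof of Theorem~\ref{thm:elfr}: I must determine the continued fraction expansion of $s_2$ (after a Dehn twist when $s_2>-1$, i.e.\ for $n\geq 0$) and read off $|(r_0+1)\cdots(r_{k-1}+1)r_k|$. I expect the expansion to take the form $[-(p+1),-2,\ldots,-2]$ or similar, with the number of $-2$'s growing linearly in $n$, yielding the counts $1$, $p+1$, and $2p$ in the cases $n=0$, $n=1$, and $n\geq 2$.

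The main obstacle will be the bookkeeping that matches the \emph{number} of tight structures on $V_2$ against the \emph{number} of realised invariant pairs, and in particular confirming that distinct tight structures correspond to distinct values of $\rot_{\Q}$ rather than being conflated. The subtlety is that $\rot_{\Q}$ carries a fractional part $r_1/p$ coming from the stabilisations of the surgery link, and I must verify that the $\Z/p$ worth of choices reflected in this fractional part, combined with the integral contributions from stabilising the push-off $L$ itself, exactly exhausts the tight structures counted by Giroux--Honda. This is where the parameter $n\geq 2$ giving $2p$ (two signs times $p$ values) versus $n=1$ giving the degenerate count $p+1$ (because $\rot_{\Q}=0$ is self-paired under orientation reversal) must be reconciled carefully with the continued-fraction count, exactly as the $n=1$ case required special treatment in Theorems~\ref{thm:elfr} and~\ref{thm:rp3}.

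Once these counts agree, the classification follows: up to coarse equivalence the exceptional knot $L$ is determined by the tight contact structure on $V_2$, so the list of explicit examples is complete, and the computed invariants distinguish them. I would finally tabulate the invariants of the explicit families (as in Table~\ref{table:rp3}) to make the correspondence transparent and to confirm the stated $d_3$-values.
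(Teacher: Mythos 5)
Your plan follows the paper's strategy almost verbatim: explicit surgery-diagram realisations whose invariants are computed with Lemma~\ref{lem:loss}, exceptionality via cancelling contact $(-1)$-surgeries (with the $(-1/(p+1))$-variant for the $\tb_{\Q}=1/p$ knot), and an upper bound from the Giroux--Honda count of tight structures on the complementary solid torus $V_2$. Your predicted gluing data, slope, continued fraction expansion $[-p-1,-2,\ldots,-2]$ and the resulting counts $1$, $p+1$, $2p$ are exactly what the paper obtains, and your remark about matching the count of tight structures on $V_2$ against the count of realised invariant pairs is the correct closing step.

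The one genuine gap is your blanket claim that ``the $d_3$-computation \dots\ certifies that the ambient structure is overtwisted.'' That argument works for $S^3$ and $\RP^3$ because each carries a \emph{unique} tight structure with known $d_3$, but $L(p,1)$ carries $p-1$ tight structures with $d_3=-\frac{1}{4}\bigl(1+\frac{\rot^2}{p}\bigr)$, and the $d_3$-values of the candidate overtwisted structures can collide with these. Concretely, for the family with $d_3=-\frac{(p-2k)^2}{4p}+\frac{3}{4}$, a collision occurs whenever $(p-2k)^2-r^2=4p$ for some admissible $r$; e.g.\ for $p=8$, $k=1$ one gets $d_3=-3/8$, which equals the $d_3$ of the tight structure on $L(8,1)$ with $\rot=2$. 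Since $d_3$ alone does not then distinguish the structures (and the $2$-dimensional invariant $\Gamma$ is not computed in this setting), you need a different certificate of overtwistedness. The paper supplies three: for the $\tb_{\Q}=1/p$ example, a Stein-fillability argument combined with the classification of tight structures on lens spaces; for the $\tb_{\Q}=1+1/p$ family, a violation of the rational Bennequin inequality $\tb_{\Q}(L)+|\rot_{\Q}(L)|\leq-\frac{1}{r}\chi(\Sigma)$; and for the $\tb_{\Q}=n+1/p$, $n\geq 2$, family, an explicitly visible overtwisted disc bounded by a standard Legendrian unknot linking the $(+1)$-surgery curve. Any one of these (suitably adapted) would repair your argument; without such a substitute the overtwistedness step is incomplete for those $(p,k)$ where the $d_3$-values coincide.
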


\begin{proof}
Legendrian rational unknots in some tight contact structure
on $L(p,1)$ can be found as follows.
Take any tight $L(p,1)$ given by
a surgery diagram as described after Proposition~\ref{prop:d3};
this gives $p-1$ possibilities.
Choose $L$ to be a Legendrian unknot forming a Hopf link with the
surgery curve, with $\tb_0=n$ and $\rot_0$ in the range
\[ \{ n+1,n+3,\ldots ,-n-3,-n-1\}; \]
this gives us $|n|$ choices. With Lemma~\ref{lem:loss}
one easily checks that the invariants of these examples 
are as listed in the theorem.

Examples of exceptional rational unknots whose invariants
have the values stated in the theorem are shown in Figure~\ref{figure:Lp1}.

\begin{figure}[h]
\labellist
\small\hair 2pt
\pinlabel $(\rma)$ at 0 290
\pinlabel $(\rmc)$ at 160 290
\pinlabel $(\rmb)$ at 0 123
\pinlabel $L$ [br] at 4 261
\pinlabel $+1$ [r] at 4 217
\pinlabel $+1$ [r] at 4 225
\pinlabel $+1$ [r] at 4 233
\pinlabel $+1$ [r] at 4 251
\pinlabel $p+1$ [l] at 105 234
\pinlabel $k+1$ [r] at 0 58
\pinlabel $p+1-k$ [l] at 107 58
\pinlabel $L$ [bl] at 76 116
\pinlabel $+1$ [tl] at 72 8
\pinlabel $n-2$ [r] at 158 126
\pinlabel $k$ [r] at 158 233
\pinlabel $p+1-k$ [l] at 250 233
\pinlabel $L$ [br] at 166 40
\pinlabel $+1$ [tl] at 230 8
\pinlabel $-1$ [bl] at 221 279
\pinlabel $-1$ [l] at 242 173
\pinlabel $-1$ [l] at 242 94
\pinlabel $-1$ [l] at 242 72
\endlabellist
\centering
\includegraphics[scale=1]{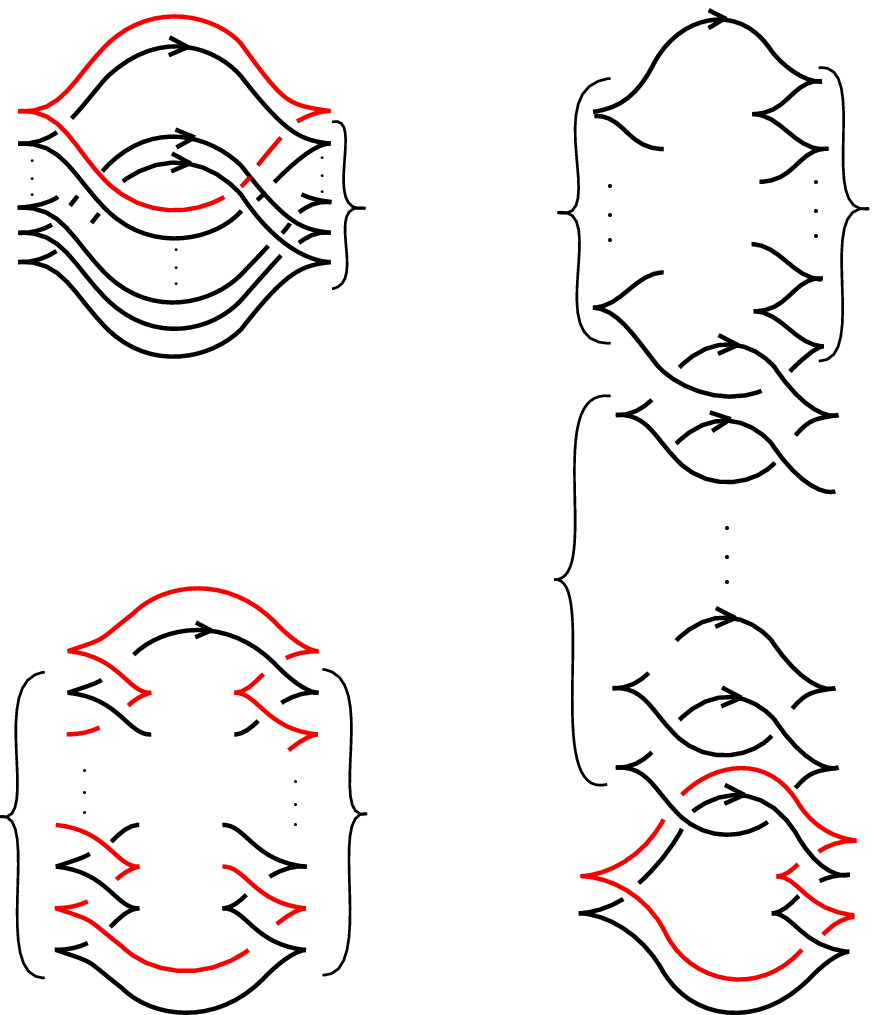}
  \caption{The exceptional rational unknots in $L(p,1)$.}
  \label{figure:Lp1}
\end{figure}

The labels are to be understood as follows. For instance, in
Figure~\ref{figure:Lp1}(b) the surgery knot (and likewise~$L$)
has $k+1$ left-cusps on the left and $p+1-k$ right-cusps
on the right, $k=0,\ldots ,p$. This means that $\tb_0=-(p+1)$
and $\rot_0=p-2k$ (for $L$ oriented clockwise). In Figure~\ref{figure:Lp1}(c)
we take $k$ in the range $1,\ldots ,p$ and $n\geq 2$;
using either orientation for $L$ is going to give us the required
$2p$ examples.
Table~\ref{table:Lp1} summarises the invariants of all
exceptional examples. We defer the computations to
Section~\ref{section:compute}.

\begin{table}
{\renewcommand{\arraystretch}{1.6}
\begin{tabular}{|c|c|c|c|c|c|}  \hline
Figure              & $n$           & $k$          & $\tb_{\Q}(L)$   &
   $\rot_{\Q}(L)$        & $d_3(\xi)$                           \\ \hline
\ref{figure:Lp1}(a) &  -            & -            & $1/p$           &
   0                     & $(3-p)/4$                            \\ \hline
\ref{figure:Lp1}(b) &  -            & $0,\ldots,p$ & $1+\frac{1}{p}$ &
$-1+\frac{2k}{p}$        & $-\frac{(p-2k)^2}{4p}+\frac{3}{4}$   \\ \hline
\ref{figure:Lp1}(c) & even $\geq 2$ & $1,\ldots,p$ & $n+\frac{1}{p}$ &
$\pm(n-2+\frac{2k}{p})$  & $-\frac{(p-2k)^2}{4p}+\frac{3}{4}$   \\ \hline
\ref{figure:Lp1}(c) & odd $\geq 3$  & $1,\ldots,p$ & $n+\frac{1}{p}$ &
$\pm(n+\frac{2}{p}-\frac{2k}{p})$ & $-\frac{(p-2k+2)^2}{4p}+\frac{3}{4}$
                                                        \\ \hline
\end{tabular}
}
\vspace{1.5mm}
\caption{Invariants of the exceptional rational unknots in $L(p,1)$.
\label{table:Lp1}}
\end{table}

In order to illustrate the range of methods available,
we prove overtwistedness of the contact structures on $L(p,1)$
represented in Figure~\ref{figure:Lp1} by a different
argument for each of the three diagrams.

For Figure~\ref{figure:Lp1}(a) we appeal to the
classification of tight contact structures on lens 
spaces~\cite{giro00,hond00}. All these structures are Stein fillable.
Now take $p-2$ additional parallel unknots and perform
contact $(-1)$-surgery along them. This produces the diagram
from Figure~\ref{figure:rp3-tb3}(a), and hence an overtwisted
contact structure on $\RP^3$. If the original surgery diagram
had produced a tight (and hence, in this
particular case, Stein fillable) structure, the resulting structure
on $\RP^3$ would still be Stein fillable, and hence tight.

For Figure~\ref{figure:Lp1}(b), we base our argument on
the rational Bennequin inequality
\[ \tb_{\Q}(L)+|\rot_{\Q}(L)|\leq-\frac{1}{r}\chi(\Sigma);\]
this inequality holds for any rationally null-homologous Legendrian knot
of order $r$ with rational Seifert surface $\Sigma$ in any
\emph{tight} contact $3$-manifold~\cite[Theorem~2.1]{baet12}.
Since the knot $L$ in Figure~\ref{figure:Lp1}(b) violates this
inequality, the manifold given by that surgery diagram must be overtwisted.

In Figure~\ref{figure:Lp1}(c) one may consider a Legendrian
unknot with $\tb=-1$ and $\rot=0$ forming a Hopf link
with the `shark' at the bottom of the picture. As in
\cite[Figure~2]{dgs04} one sees that this Legendrian unknot is
the boundary of an overtwisted disc in the surgered manifold;
the other surgery curves do not intersect this disc.

In each of the examples in Figure~\ref{figure:Lp1},
$L$ is exceptional by the same reasoning
as in the case of $\RP^3$ (in case (a) perform a $(-1/(p+1))$-surgery
along~$L$).

The argument that our list of examples is complete is very similar
to the case of $\RP^3$, and we only list a few of the
necessary modifications. The gluing of $V_1$ and $V_2$ is now
given by $\mu_2=-\mu_1+p\lambda_1$ and $\lambda_2=\lambda_1$.
With $\lambdac=n\mu_1+\lambda_1$, $n\in\Z$, we get $\tb_{\Q}(L)=n+1/p$.
The corresponding slope of $\partial V_2$ is $s_2=-p-1/n$.

For $n\leq -1$, there are $(p-1)\cdot|n|$ distinct contact
structures on~$V_2$. These correspond to the rational unknots
in a tight $L(p,1)$.

For $n=0$ there is again a unique tight contact structure on~$V_2$.
For $n\geq 1$, the slope $-p-1/n$ has the continued fraction
expansion $[-p-1,-2,\ldots ,-2]$, where $-2$ occurs $n-1$ times.
So we have $p+1$ tight structures for $n=1$, and $2p$ each for
$n\geq 2$.
\end{proof}
\section{The lens space $L(5,2)$}
\label{section:L52}
We expect the analogue of Theorem~\ref{thm:Lp1} to hold for
arbitrary lens spaces $L(p,q)$. The number of
Legendrian realisations of the (at most) two rational unknots
in $L(p,q)$ can be computed as before, and one can also
develop some systematics in the surgery diagrams.

Instead of giving this general picture, we concentrate
on one specific example, the lens space $L(5,2)$
and the two topological types $K_1,K_2$ of rational
unknots described in Figure~\ref{figure:K1K2}.
This example serves to illustrate a `stable' pattern
in the surgery diagrams: for sufficiently large values
of $\tb_{\Q}$, there is essentially one general diagram
that covers all cases; for small values of $\tb_{\Q}$
one needs to find some \emph{ad hoc} diagrams. The diagrams
for the `stable' situation generalise in a straightforward manner
to $L(p,q)$.

For $L(5,2)$, the gluing map for the two Heegaard tori is given by
$\mu_2=-2\mu_1+5\lambda_1$ and $\lambda_2=\mu_1-2\lambda_1$.
For the contact framing $\lambdac=n\mu_1+\lambda_1$ of
a Legendrian realisation of $K_1$ one then computes
$\tb_{\Q}=n+2/5$. The corresponding slope of the complementary
solid torus $V_2$ is then equal to $(5n+2)/(2n+1)$, which after
a single Dehn twist becomes
\[ s_2'=-1 -\frac{2n+1}{3n+1}<-1.\]
In Table~\ref{table:L52-numbers1} we list the continued fraction
expansions of this slope and the corresponding number
of tight contact structures on~$V_2$, which gives us the number
of Legendrian realisations of $K_1$ with tight complements.

\begin{table}
\begin{tabular}{|c|c|c|}  \hline
$n$           & c.f.e.\ of $s_2'$          & $\#$ Leg.\ real.    \\ \hline
$\leq -2$     & $[-2,-3,n]$              & $|2n|$            \\ \hline
$-1$          & $[-2,-2]$                & $2$               \\ \hline
$0$           & $-2$                     & $2$               \\ \hline
$1$           & $[-2,-4]$                & $4$               \\ \hline
$\geq 2$      & $[-2,-4,-2,\ldots ,-2]$  & $6$               \\ \hline
\end{tabular}
\vspace{1.5mm}
\caption{Number of Legendrian realisations of $K_1$ in $L(5,2)$.
\label{table:L52-numbers1}}
\end{table}

The cases with $n\leq -1$ correspond to a tight contact structure
on $L(5,2)$ as follows. Realise $L(5,2)$ by contact $(-1)$-surgeries
along a `shark' and a standard $\tb=-1$ Legendrian unknot forming a Hopf link.
A standard Legendrian unknot linked once with the shark gives
a Legendrian realisation of $K_1$ with $\tb_{\Q}=-1+2/5$.
Depending on a choice of orientation, this has $\rot_{\Q}=\pm 2/5$.
By successive stabilisations of this knot, one obtains the
$|2n|$ realisations with $\tb_{\Q}=n+2/5$.

The surgery pictures of the exceptional realisations
of $K_1$ are given in Figures~\ref{figure:L52K1a} and~\ref{figure:L52K1b};
the invariants are listed in Table~\ref{table:L52K1}.
The computations follow the same pattern as in the case of
$L(p,1)$, so we shall not reproduce them here.

\begin{figure}[h]
\labellist
\small\hair 2pt
\pinlabel $(\rma)$ at 0 88
\pinlabel $+1$ [l] at 75 29
\pinlabel $+1$ [r] at 0 36
\pinlabel $+1$ [r] at 0 46
\pinlabel $+1$ [r] at 0 56
\pinlabel $+1$ [br] at 18 69
\pinlabel $+1$ [br] at 77 72
\pinlabel $-1$ [bl] at 134 71
\pinlabel $L$ [tl] at 59 12
\pinlabel $(\rmb)$ at 167 88
\pinlabel $+1$ [tl] at 215 10
\pinlabel $-1$ [bl] at 248 47
\pinlabel $L$ [br] at 181 58
\pinlabel $(\rmc)$ at 296 88
\pinlabel $+1$ [tl] at 354 9
\pinlabel $-1$ [bl] at 382 66
\pinlabel $L$ [br] at  313 58
\endlabellist
\centering
\includegraphics[scale=0.8]{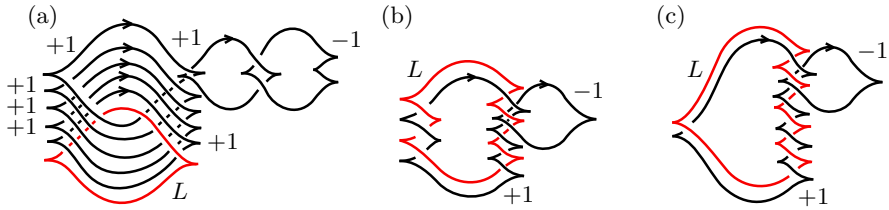}
  \caption{Exceptional rational unknots in $L(5,2)$ isotopic to $K_1$ I.}
  \label{figure:L52K1a}
\end{figure}

\begin{figure}[h]
\labellist
\small\hair 2pt
\pinlabel $(\rma)$ at 0 202
\pinlabel $+1$ [l] at 68 14
\pinlabel $-1$ [l] at 64 56
\pinlabel $-1$ [l] at 64 74
\pinlabel $-1$ [l] at 64 136
\pinlabel $n-2$ [r] at 0 96
\pinlabel $-1$ [r] at 2 168
\pinlabel $-1$ [l] at 91 185
\pinlabel $(\rmb)$ at 124 202
\pinlabel $+1$ [l] at 194 14
\pinlabel $-1$ [l] at 190 56
\pinlabel $-1$ [l] at 190 74
\pinlabel $-1$ [l] at 190 136
\pinlabel $n-2$ [r] at 124 96
\pinlabel $-1$ [r] at 138 170
\pinlabel $-1$ [l] at 222 173
\pinlabel $(\rmc)$ at 248 202
\pinlabel $+1$ [l] at 317 14
\pinlabel $-1$ [l] at 313 56
\pinlabel $-1$ [l] at 313 74
\pinlabel $-1$ [l] at 313 136
\pinlabel $n-2$ [r] at 248 96
\pinlabel $-1$ [r] at 258 183
\pinlabel $-1$ [l] at 344 170
\endlabellist
\centering
\includegraphics[scale=0.8]{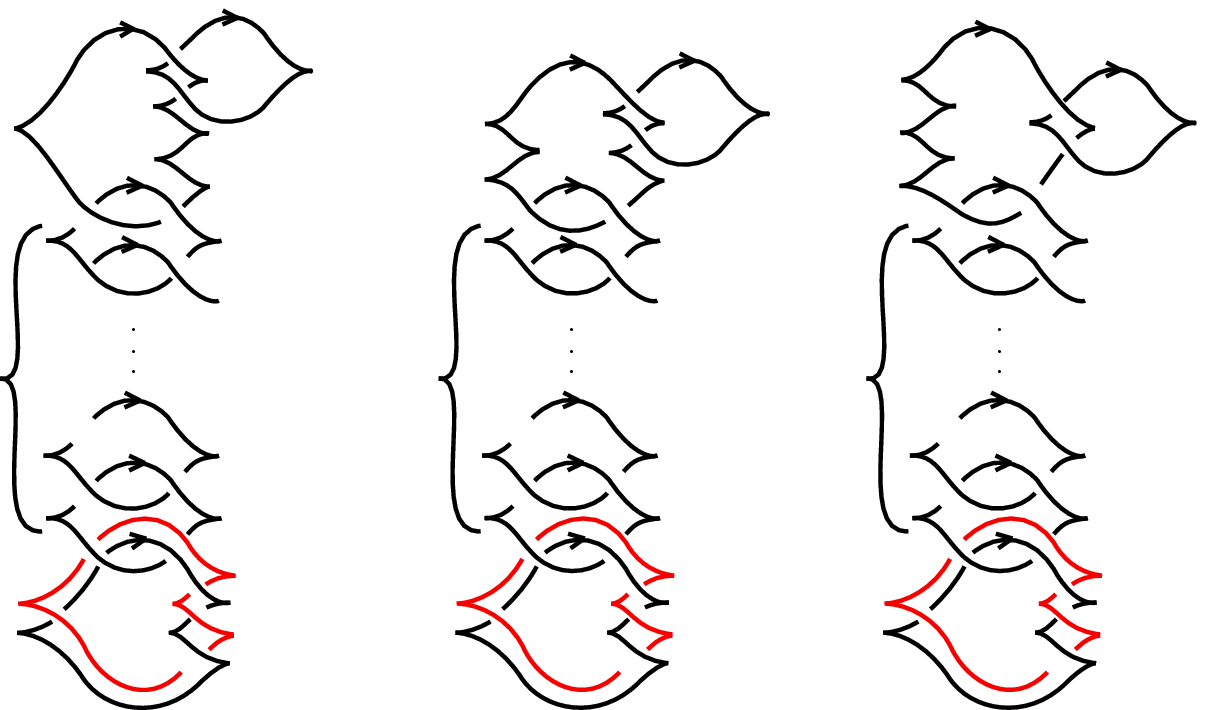}
  \caption{Exceptional rational unknots in $L(5,2)$ isotopic to $K_1$ II.}
  \label{figure:L52K1b}
\end{figure}

\begin{table}
{\renewcommand{\arraystretch}{1.1}
\begin{tabular}{|c|c|c|c|}  \hline
Figure                 & $n$           & $\tb_{\Q}(L)$  &
  $\rot_{\Q}(L)$         \\ \hline
\ref{figure:L52K1a}(a) &  -            & $2/5$          & 
  $\pm 1/5$ \\ \hline
\ref{figure:L52K1a}(b) &  -            & $7/5$          &
  $\pm 2/5$ \\ \hline
\ref{figure:L52K1a}(c) &  -            & $7/5$          &
  $\pm 6/5$ \\ \hline
\ref{figure:L52K1b}(a) & even $\geq 2$ & $n+2/5$        &
  $\pm (n-7/5)$ \\ \hline
\ref{figure:L52K1b}(a) & odd $\geq 3$  & $n+2/5$        &
  $\pm (n+1/5)$ \\ \hline
\ref{figure:L52K1b}(b) &     $\geq 2$  & $n+2/5$        &
  $\pm (n-3/5)$ \\ \hline
\ref{figure:L52K1b}(c) & even $\geq 2$ & $n+2/5$        &
  $\pm (n+1/5)$ \\ \hline
\ref{figure:L52K1b}(c) & odd $\geq 3$  & $n+2/5$        &
  $\pm (n-7/5)$ \\ \hline
\end{tabular}
}
\vspace{1.5mm}
\caption{Invariants of the exceptional realisations of $K_1$ in $L(5,2)$.}
\label{table:L52K1}
\end{table}

The Legendrian realisations of $K_2$ in $L(5,2)$ have $\tb_{\Q}=n+3/5$.
The numbers of different realisations are listed in
Table~\ref{table:L52-numbers2}. Again, the cases with $n\leq -1$
correspond to a tight contact structure on $L(5,2)$, and
they are realised in a similar fashion as the tight cases for~$K_1$.
For the exceptional realisations of $K_2$, see
Figures~\ref{figure:L52K2a} and~\ref{figure:L52K2b}
and Table~\ref{table:L52K2}.

\begin{table}
\begin{tabular}{|c|c|c|}  \hline
$n$           & c.f.e.\ of $s_1'$          & $\#$ Leg.\ real.    \\ \hline
$\leq -2$     & $[-3,-2,n]$              & $|2n|$            \\ \hline
$-1$          & $-2$                     & $2$               \\ \hline
$0$           & $-3$                     & $3$               \\ \hline
$1$           & $[-3,-3]$                & $6$               \\ \hline
$\geq 2$      & $[-3,-3,-2,\ldots ,-2]$  & $8$               \\ \hline
\end{tabular}
\vspace{1.5mm}
\caption{Number of Legendrian realisations of $K_2$ in $L(5,2)$.
\label{table:L52-numbers2}}
\end{table}

\begin{figure}[h]
\labellist
\small\hair 2pt
\pinlabel $(\rma)$ at 57 179
\pinlabel $-1$ [r] at 57 153
\pinlabel $+1$ [l] at 166 133
\pinlabel $+1$ [l] at 166 142
\pinlabel $+1$ [l] at 166 151
\pinlabel $L$ [tl] at 149 116
\pinlabel $(\rmb)$ at 225 179
\pinlabel $+1$ [l] at 301 133
\pinlabel $+1$ [r] at 226 140
\pinlabel $+1$ [r] at 226 151
\pinlabel $+1$ [br] at 242 162
\pinlabel $+1$ [br] at 306 172
\pinlabel $-1$ [l] at 366 166
\pinlabel $L$ [tl] at 284 116
\pinlabel $(\rmc)$ at 0 88
\pinlabel $+1$ [l] at 105 21
\pinlabel $-1$ [bl] at 43 77
\pinlabel $L$ [l] at 105 64
\pinlabel $(\rmd)$ at 148 88
\pinlabel $+1$ [l] at 246 30
\pinlabel $-1$ [bl] at 193 77
\pinlabel $L$ [l] at 246 55
\pinlabel $(\rme)$ at 301 88
\pinlabel $+1$ [l] at 408 22
\pinlabel $-1$ [bl] at 345 77
\pinlabel $L$ [bl] at 381 55
\endlabellist
\centering
\includegraphics[scale=0.78]{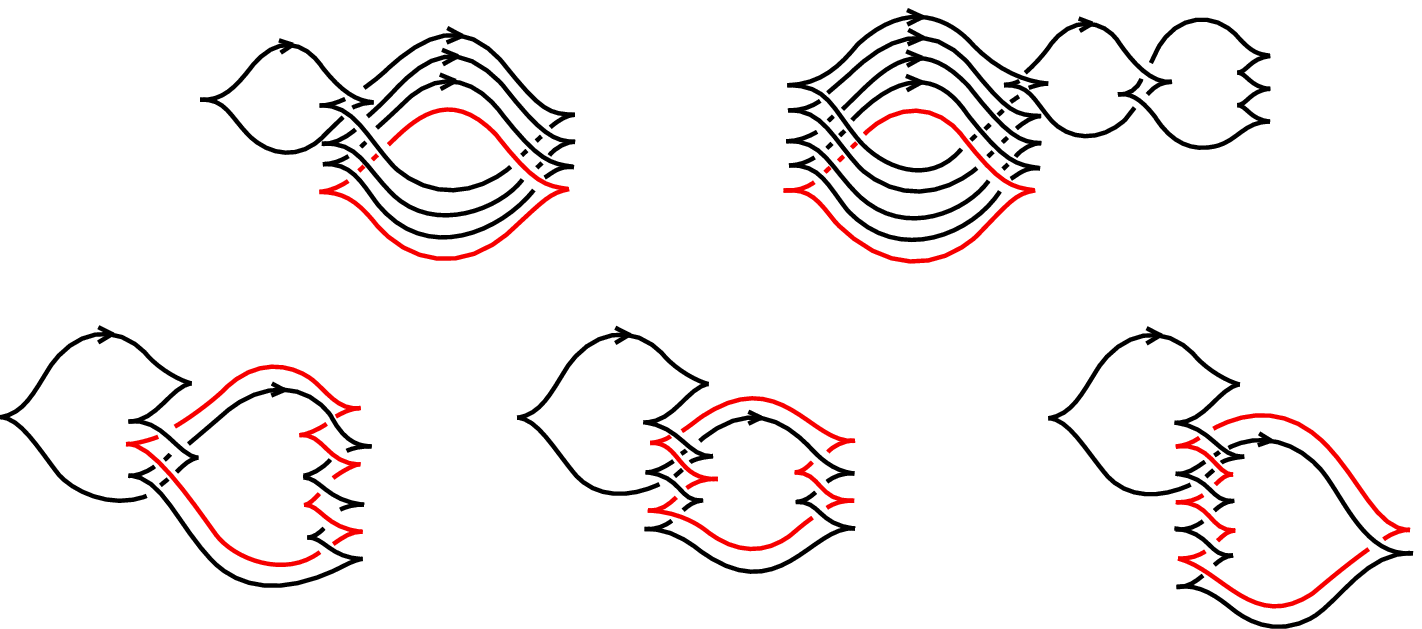}
  \caption{Exceptional rational unknots in $L(5,2)$ isotopic to $K_2$ I.}
  \label{figure:L52K2a}
\end{figure}

\begin{figure}[h]
\labellist
\small\hair 2pt
\pinlabel $(\rma)$ at 0 206
\pinlabel $-1$ [r] at 0 175
\pinlabel $-1$ [l] at 91 166
\pinlabel $-1$ [l] at 94 136
\pinlabel $-1$ [l] at 94 74
\pinlabel $-1$ [l] at 94 56
\pinlabel $+1$ [l] at 97 14
\pinlabel $L$ [l] at 97 39
\pinlabel $n-2$ [r] at 30 95
\pinlabel $(\rmb)$ at 144 206
\pinlabel $-1$ [r] at 145 172
\pinlabel $-1$ [l] at 232 166
\pinlabel $-1$ [l] at 234 136
\pinlabel $-1$ [l] at 234 74
\pinlabel $-1$ [l] at 234 56
\pinlabel $+1$ [l] at 237 14
\pinlabel $L$ [l] at 239 39
\pinlabel $n-2$ [r] at 169 95
\pinlabel $(\rmc)$ at 289 206
\pinlabel $-1$ [r] at 290 181
\pinlabel $-1$ [l] at  377 161
\pinlabel $-1$ [r] at 322 136
\pinlabel $-1$ [r] at 322 74
\pinlabel $-1$ [r] at 322 56
\pinlabel $+1$ [l] at 376 14
\pinlabel $L$ [l] at 378 39
\pinlabel $n-2$ [l] at 389 95
\pinlabel $(\rmd)$ at 427 206
\pinlabel $-1$ [r] at 427 181
\pinlabel $-1$ [l] at 516 162
\pinlabel $-1$ [r] at 461 136
\pinlabel $-1$ [r] at 461 74
\pinlabel $-1$ [r] at 461 56
\pinlabel $+1$ [l] at 516 14
\pinlabel $L$ [l] at 518 39
\pinlabel $n-2$ [l] at 528 95
\endlabellist
\centering
\includegraphics[scale=0.58]{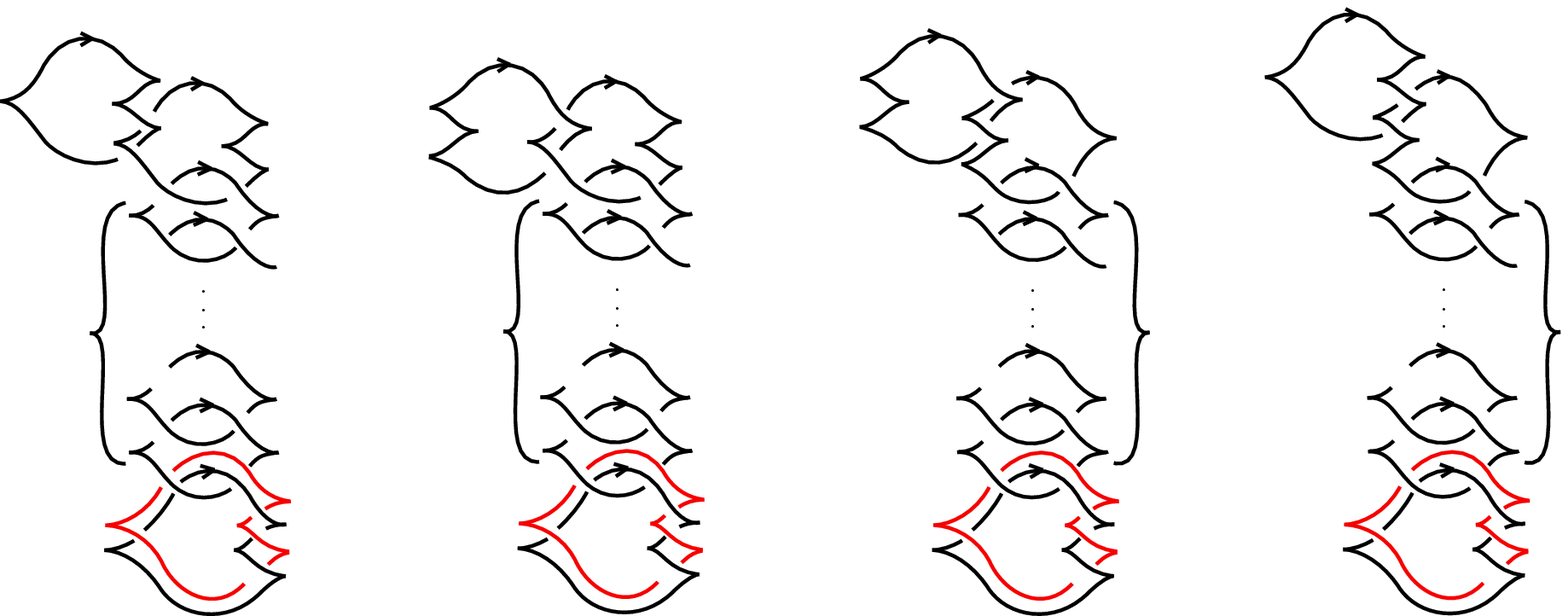}
  \caption{Exceptional rational unknots in $L(5,2)$ isotopic to $K_2$ II.}
  \label{figure:L52K2b}
\end{figure}

\begin{table}
{\renewcommand{\arraystretch}{1.1}
\begin{tabular}{|c|c|c|c|}  \hline
Figure                 & $n$           & $\tb_{\Q}(L)$  &
  $\rot_{\Q}(L)$         \\ \hline
\ref{figure:L52K2a}(a) &  -            & $3/5$          & 
  $0$ \\ \hline
\ref{figure:L52K2a}(b) &  -            & $3/5$          &
  $\pm 2/5$ \\ \hline
\ref{figure:L52K2a}(c) &  -            & $8/5$          &
  $\pm 7/5$ \\ \hline
\ref{figure:L52K2a}(d) &  -            & $8/5$          &
  $\pm 1/5$ \\ \hline
\ref{figure:L52K2a}(e) &  -            & $8/5$          &
  $\pm 1$   \\ \hline
\ref{figure:L52K2b}(a) & even $\geq 2$ & $n+3/5$        &
  $\pm(n-6/5)$   \\ \hline
\ref{figure:L52K2b}(a) & odd $\geq 3$  & $n+3/5$        &
  $\pm(n+2/5)$   \\ \hline
\ref{figure:L52K2b}(b) & even $\geq 2$ & $n+3/5$        &
  $\pm(n-4/5)$   \\ \hline
\ref{figure:L52K2b}(b) & odd $\geq 3$  & $n+3/5$        &
  $\pm n$       \\ \hline
\ref{figure:L52K2b}(c) & even $\geq 2$ & $n+3/5$        &
  $\pm(n+2/5)$   \\ \hline
\ref{figure:L52K2b}(c) & odd $\geq 3$  & $n+3/5$        &
  $\pm(n-6/5)$   \\ \hline
\ref{figure:L52K2b}(d) & even $\geq 2$ & $n+3/5$        &
  $\pm n$       \\ \hline
\ref{figure:L52K2b}(d) & odd $\geq 3$  & $n+3/5$        &
  $\pm(n-4/5)$   \\ \hline
\end{tabular}
}
\vspace{1.5mm}
\caption{Invariants of the exceptional realisations of $K_2$ in $L(5,2)$.}
\label{table:L52K2}
\end{table}

\section{Some computations}
\label{section:compute}
In this section we collect some hints for the
computation of the invariants in various of the examples
described above.
\subsection{Plamenevskaya's examples}
We consider the example in Figure~\ref{figure:olga-lens}(c).
Here the linking matrix $M=M^{(n)}$ is the $((n-1)\times (n-1))$-matrix
\[ M=\left(\begin{array}{ccccccc}
-1 & -1 &    &        &        &    &   \\
-1 & -2 & -1 &        &        &    &   \\
   & -1 & -2 & -1     &        &    &   \\
   &    &    & \ddots &        &    &   \\
   &    &    &        & \ddots &    &   \\
   &    &    &        & -1     & -2 & -1\\
   &    &    &        &        & -1 & -2
\end{array}\right) , \]
where we have numbered the surgery curves $L_1,\ldots ,L_{n-1}$
from bottom to top in the figure.
By successive subtraction of the $i^{\mathrm{th}}$ from the
$(i+1)^{\mathrm{st}}$ row, $i=1,\ldots, n-2$, we obtain
\[ \left(\begin{array}{rrcccrc}
-1 & -1 &    &        &     &   \\
 0 & -1 & -1 &        &     &   \\
   &  0 & -1 & -1     &     &   \\
   &    &    & \ddots &     &   \\
   &    &    &        &  -1 & -1\\
   &    &    &        &   0 & -1
\end{array}\right) , \]
hence $\det M=(-1)^{n-1}$. The first row of $M^{-1}$ is
\[ (-(n-1),n-2,-(n-3),\ldots,(-1)^{n-1}\cdot 1).\]
This information suffices to compute the rotation number of~$L$
(with clockwise orientation):
\[ \rot(L)=1-\left\langle
\left(\begin{array}{c}1\\ 0\\0 \\ \vdots\\ 0\end{array}\right),
M^{-1}
\left(\begin{array}{c}-2\\-1\\0\\ \vdots\\ 0\end{array}\right)
\right\rangle
=1-2(n-1)+n-2=-(n-1).\]

By successive subtraction of the $i^{\mathrm{th}}$ from the
$(i+1)^{\mathrm{st}}$ row, starting from $i=2$, in the
$(n\times n)$-matrix $M_0^{(n)}$ we obtain
\[ \left(\begin{array}{crrccrc}
 0         & -2 & -1 &    &        &    &   \\
-2         & -1 & -1 &    &        &    &   \\
 1         &  0 & -1 & -1 &        &    &   \\
-1         &  0 &  0 & -1 & -1     &    &   \\
\vdots     &    &    &    & \ddots &    &   \\
(-1)^{n-2} &    &    &    &        & -1 & -1\\
(-1)^{n-1} &    &    &    &        &  0 & -1
\end{array}\right) . \]
By expanding the determinant of this matrix along the
last row one shows inductively that
\[ \det M_0^{(n)}=(-1)^{n-1}(n+2).\]
Hence
\[ \tb(L)=-2+\frac{(-1)^{n-1}(n+2)}{(-1)^{n-1}}=n.\]
\subsection{Projective $3$-space}
\label{subsection:rp3}
We consider the example in Figure~\ref{figure:rp3-tb5}(a).
The Kirby moves that transform the surgery link into
a single unknot with topological framing $-2$, and $L$ into 
the rational unknot of Figure~\ref{figure:standard-unknot}, are shown
in Figure~\ref{figure:rp3-kirby}. These moves are analogous to those
in Plamenevskaya's example~\cite[Figure~4]{plam12}; we say more about them
further down, where we use them to compute $\tb_{\Q}(L)$ without appealing
to Lemma~\ref{lem:loss}.

\begin{figure}[h]
\labellist
\small\hair 2pt
\pinlabel (i) at 0 192
\pinlabel $-2$ at 9 169
\pinlabel $L$ [bl] at 40 187
\pinlabel $-1$ [b] at 26 155
\pinlabel $-2$ [tl] at 50 160
\pinlabel $-2$ [tl] at 73 160
\pinlabel $-2$ [t] at 115 160
\pinlabel $-3$ [t] at 136 157
\pinlabel (ii) at 0 131
\pinlabel $L$ [bl] at 40 128
\pinlabel $+1$ [r] at 0 113
\pinlabel $+1$ [r] at 0 104
\pinlabel $+1$ [b] at 26 95
\pinlabel $-2$ [tl] at 50 100
\pinlabel $-2$ [tl] at 73 100
\pinlabel $-2$ [t] at 115 100
\pinlabel $-3$ [t] at 136 97
\pinlabel (iii) at 0 75
\pinlabel $L$ [bl] at 28 73
\pinlabel $+1$ [r] at 1 55
\pinlabel $+1$ [r] at 1 45
\pinlabel $+1$ [t] at 26 34
\pinlabel $-2$ [tl] at 50 41
\pinlabel $-2$ [tl] at 73 41
\pinlabel $-2$ [t] at 115 41
\pinlabel $-3$ [t] at 136 38
\pinlabel (iv) at 0 15
\pinlabel $L$ [r] at 60 16
\pinlabel $-2$ [l] at 101 16
\endlabellist
\centering
\includegraphics[scale=1.2]{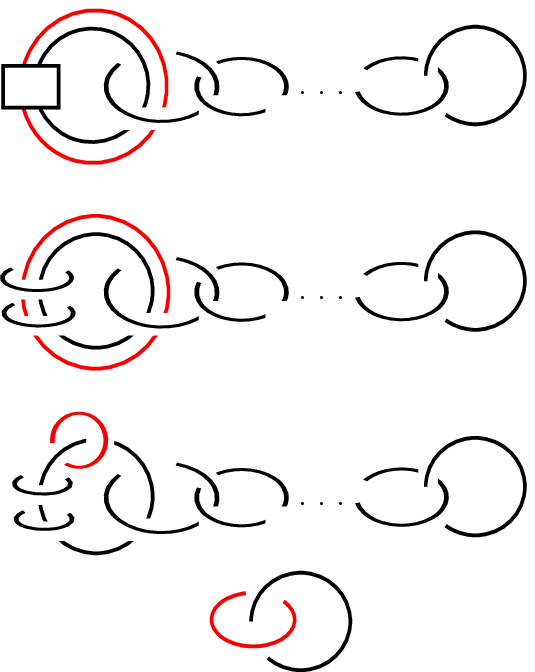}
  \caption{Kirby moves for the example in Figure~\ref{figure:rp3-tb5}(a).}
  \label{figure:rp3-kirby}
\end{figure}

The linking matrix $M=M^{(n)}$ is the $(n\times n)$-matrix
\[ M=\left(\begin{array}{ccccccc}
-1 & -1 &    &        &        &    &   \\
-1 & -2 & -1 &        &        &    &   \\
   & -1 & -2 & -1     &        &    &   \\
   &    &    & \ddots &        &    &   \\
   &    &    &        & \ddots &    &   \\
   &    &    &        & -1     & -2 & -1\\
   &    &    &        &        & -1 & -3
\end{array}\right) , \]
where, as before, we have numbered the surgery curves $L_1,\ldots ,L_n$
from bottom to top. Observe that this $M^{(n)}$ equals the
$M^{(n+1)}$ from the previous example, with a single change in
the very last entry of the matrix. Hence, arguing as before, one
obtains $\det M=(-1)^n\cdot 2$.

The first row of $M^{-1}$ is
\begin{equation}
\label{eqn:row1}
(-(2n-1)/2,(2n-3)/2,-(2n-5)/2,\ldots ,(-1)^n/2);
\end{equation}
the last row is
\begin{equation}
\label{eqn:row2}
((-1)^n/2,(-1)^{n-1}/2,\ldots,1/2,-1/2).
\end{equation}
Hence, with $L$ oriented clockwise,
\begin{eqnarray*}
\rot_{\Q}(L) & = & 1-\left\langle
       \left(\begin{array}{c}1\\ 0\\ \vdots \\ 0\\ 1\end{array}\right),
       M^{-1}
       \left(\begin{array}{c}-2\\-1\\0\\ \vdots\\ 0\end{array}\right)
       \right\rangle \\
 & = & 1-(2n-1-\frac{2n-3}{2}+(-1)^{n-1}-(-1)^{n-1}\cdot\frac{1}{2}) \\
 & = & -n+\frac{1}{2}+(-1)^n\cdot\frac{1}{2} \\
 & = & \begin{cases}
       -(n-1) & \text{for $n$ even,}\\
       -n     & \text{for $n$ odd.}
       \end{cases}
\end{eqnarray*}

By expanding the determinant of $M_0^{(n)}$, transformed as
in the previous example, along the last row, and using the
result from the previous example, one obtains
\[ \det M_0^{(n)}=(-1)^n(2n+5).\]
Hence
\[ \tb_{\Q}(L)=-2+\frac{(-1)^n(2n+5)}{(-1)^n\cdot 2}=n+\frac{1}{2}.\]
Alternatively, $\tb_{\Q}(L)$ can be computed by keeping track
of the framing of $L$ during the Kirby moves in Figure~\ref{figure:rp3-kirby}.
In $S^3$ we have $\tb(L)=-2$, so initially the contact framing of $L$ is
given by $-2\mu_0+\lambda_0$, where $\lambda_0$ is (and remains
throughout the following moves) the longitude
corresponding to the surface framing in $S^3$. This framing
curve can be thought of as a parallel copy of $L$, also going through
the $(-2)$-box.

To get from (i) to~(ii), we make two positive blow-ups, i.e.\ we add two
$(+1)$-framed unknots to the picture (corresponding to taking the
connected sum with two copies of $\CP^2$), and then slide $L$
and the $(-1)$-framed knot over them to undo the $(-2)$-linking.
This adds two positive twists to the framing of $L$, so the framing is
now~$\lambda_0$.

To get from (ii) to (iii), we slide $L$ over the parallel
$(+1)$-framed unknot. This adds $+1$ to the framing of~$L$.
To get from (iii) to (iv), we first blow down the two
$(+1)$-framed unknots not linked with $L$. This has no effect on $L$,
but changes the third $(+1)$-framed unknot into a $(-1)$-framed one.
Now we blow down the chain of unknots, starting with the
$(-1)$-framed one. At each step, the adjacent $(-2)$-framed
unknot gets framing $-1$, and the framing of $L$
increases by~$1$. Since we have to blow down a total
of $n-1$ $(-1)$-framed unknots to obtain (iv), the framing of $L$
finally becomes $n\mu_0+\lambda_0$.

Now recall equation~(\ref{eqn:tb}) from the proof of
Lemma~\ref{lem:loss} and the argument preceding it.
The unique $a_0\in\Z$ such that $a_0\mu_0+2\lambda_0$ is
nullhomologous in the surgered manifold given by
Figure~\ref{figure:rp3-kirby}(iv) is $a_0=-1$. Hence
\[ 2\cdot\tb_{\Q}(L)=(n\mu_0+\lambda_0)\bullet
(-\mu_0+2\lambda_0)=2n+1,\]
giving us the same result for $\tb_{\Q}(L)$ as before.

Here is how to compute $d_3(\xi)$ for this example. The surgery diagram
is equivalent to $n-1$ unlinked $(-1)$-framed unknots and a further
unlinked $(-2)$-framed unknot. So the signature of the
$4$-dimensional filling $X$ is $-n$, its Euler characteristic
is $n+1$. In order to compute $c^2$, we follow the
algorithm described in~\cite{dgs04}. The Poincar\'e
dual $\mathrm{PD}(c)\in H_2(X,\partial X)$ --- in terms of the obvious
generators of $H_2(X,\partial X)$, the meridional discs to
the surgery curves --- is given by the vector $(1,0,\ldots,0,1)$
of rotation numbers. The homomorphism $H_2(X)\rightarrow
H_2(X,\partial X)$ induced by inclusion is described, again in terms of
the obvious bases, by the linking matrix~$M$, so
the class $C\in H_2(X)$ that maps
to $\mathrm{PD}(c)$ can be thought of as a row vector
with $MC^t= (1,0,\ldots,0,1)^t$. So this vector $C$
is given by the sum of the
vectors in (\ref{eqn:row1}) and~(\ref{eqn:row2}). Then
\begin{eqnarray*}
c^2 & = & C^2\; =\; CMC^t\\
 & = & C\cdot(1,0,\ldots,0,1)^t\\
 & = & -\frac{2n-1}{2}+\frac{(-1)^n}{2}
       +\frac{(-1)^n}{2}-\frac{1}{2}\\
 & = & -n+(-1)^n.
\end{eqnarray*}
Then with equation~(\ref{eqn:d3}), observing that $q=1$ in this example,
we obtain
\[ d_3(\xi)=\begin{cases}
            3/4 & \text{for $n$ even},\\
            1/4 & \text{for $n$ odd.}
\end{cases} \]

\subsection{The lens spaces $L(p,1)$.}
We start with the example in Figure~\ref{figure:Lp1}(a).
The Kirby moves in Figure~\ref{figure:Lp1-kirby}
show that $L$ is the rational unknot in $L(p,1)$.

\begin{figure}[h]
\labellist
\small\hair 2pt
\pinlabel $L$ [r] at 2 137
\pinlabel $-1$ at 42 137
\pinlabel $0$ [tl] at 68 128
\pinlabel $0$ [tl] at 79 119
\pinlabel $L$ [r] at 0 78
\pinlabel $+1$ [b] at 61 86
\pinlabel $p+1$ [t] at 63 56
\pinlabel $+1$ [r] at 51 77
\pinlabel $+1$ [r] at 73 77
\pinlabel $L$ [r] at 9 18
\pinlabel $-p$ [l] at 77 20
\endlabellist
\centering
\includegraphics[scale=1]{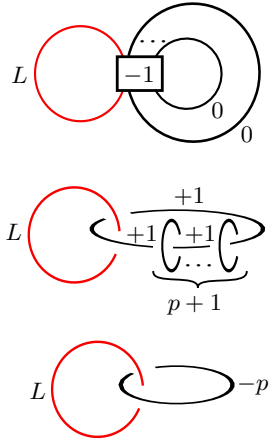}
  \caption{Kirby moves for the example in Figure~\ref{figure:Lp1}(a).}
  \label{figure:Lp1-kirby}
\end{figure}

The linking matrix is the $((p+1)\times (p+1))$-matrix
$M$ with zeros on the diagonal and all other entries equal to~$1$.
It is a simple exercise to show that $\det M=-p$.
Correspondingly, $\det M_0=-(p+1)$. It follows that
\[ \tb_{\Q}(L)=-1+\frac{p+1}{p}=\frac{1}{p}.\]
Since $\rot_0,\rot_1,\ldots ,\rot_{p+1}=0$, we have $\rot_{\Q}(L)=0$.

\vspace{1mm}

Next we consider the example in Figure~\ref{figure:Lp1}(b).
Here the topological Kirby diagram shows
directly that $L$ is the rational unknot in $L(p,1)$.
The linking matrix $M$ is the $(1\times 1)$-matrix $(-p)$,
the matrix $M_0$ is
\[ \left(\begin{array}{cc}
0      & -(p+1)\\
-(p+1) & -p\end{array}\right).\]
Hence
\[ \tb_{\Q}(L)=-(p+1)+\frac{(p+1)^2}{p}=1+\frac{1}{p}.\]
The rotation numbers $\rot_i$, $i=0,1$, equal $p-2k$
(with $L$ oriented clockwise). Hence
\[ \rot_{\Q}(L)=p-2k-(p-2k)\cdot\bigl(-\frac{1}{p}\bigr)\cdot \bigl(-(p+1)
\bigr)=-1+\frac{2k}{p}.\]
So for $k$ in the range $0,\ldots,p$ we get $p+1$ different
Legendrian realisations.

\vspace{1mm}

Finally, we come to the example in Figure~\ref{figure:Lp1}(c).
Here the computations are minor modifications
of those for the example we discussed in the case of $\RP^3$.
The Kirby moves for showing that $L$ is the rational unknot are
as in Figure~\ref{figure:rp3-kirby}. The linking matrix $M$
differs from the one in that previous case by the
substitution of $-(p+1)$ for~$-3$. Thus, one finds
$\det M=(-1)^n\cdot p$ and $\det M_0=(-1)^n\bigl( p(n+2)+1\bigr)$,
which yields
\[ \tb_{\Q}(L)=n+\frac{1}{p}.\]
The first row of $M^{-1}$ is now
\[ \bigl( -\frac{(n-1)p+1}{p},\frac{(n-2)p+1}{p},\ldots,
(-1)^{n-1}\frac{p+1}{p},(-1)^n\frac{1}{p}\bigr);\]
the last row is
\[ ((-1)^n/p,(-1)^{n-1}/p,\ldots,1/p,-1/p).\]
With the rotation number of the surgery curve at the top
of Figure~\ref{figure:Lp1}(c) being $\rot_n=p-2k+1$, one computes
with Lemma~\ref{lem:loss} that $\rot_{\Q}(L)$, with either
orientation of $L$, takes for $k=1,\ldots ,p$ the values
claimed in Theorem~\ref{thm:Lp1}.

\vspace{1mm}

We close with some comments about the computation of the
$d_3$-invariant. For the example in Figure~\ref{figure:Lp1}(a),
the only term in formula (\ref{eqn:d3}) that is not entirely
obvious is the signature. Topologically, the surgery diagram
consists of $p+1$ $0$-framed unknots with a common
$(-1)$-linking. By making a $(+1)$-blow up and sliding the
corresponding $(+1)$-framed unknot over this link, we obtain
$p+1$ unlinked $(+1)$-framed unknots, all of which are linked
once with the extra $(+1)$-framed unknot. By sliding the
$p+1$ unknots off the extra one, we obtain an unlink
consisting of a single $(-p)$-framed unknot and $p+1$ $(+1)$-framed
unknots. This describes a filling of signature~$p$. Since we had to
add a $(+1)$-framed unknot to arrive at this picture, we have $\sigma=p-1$.

The computation of $d_3$ for the example in
Figure~\ref{figure:Lp1}(b) presents no difficulty.

For the example in Figure~\ref{figure:Lp1}(c), one sees
$\sigma=-n$ by an argument similar to that for~(a). Since the
surgery diagram corresponds to adding $n$ $2$-handles,
we have $\chi=1+n$. The vector of rotation numbers
is given by $(1,0,\ldots,0,p-2k+1)$, i.e.\ we need to solve
the equation
\[ MC^t=(1,0,\ldots,0,p-2k+1)^t\]
over~$\Q$. This is achieved by
\[ C=\begin{cases}
\frac{1}{p}\bigl(-(2k+(n-2)p),+(2k+(n-3)p),\ldots,
-2k,+(2k-p)\bigr) & \text{for $n$ even},\\
\frac{1}{p}\bigl(+(2k-np-2),\ldots,-(2k-2p-2),+(2k-p-2)\bigr) &
\text{for $n$ odd}.
\end{cases} \]
Then one computes as in Section~\ref{subsection:rp3}.
\begin{ack}
A major part of the work on this project was done during an inspiring
``Research in Pairs'' stay at the Mathematisches Forschungsinstitut
Oberwolfach in September 2012. We thank the Forschungsinstitut for
its support, and its efficient and friendly staff for creating
an exceptional research environment.
\end{ack}

\end{document}